\documentclass[11pt]{amsart}
  
\input xy
\xyoption{all}
\usepackage{hyperref}
  \usepackage[dvipsnames]{xcolor}
\usepackage{amssymb}
\usepackage{amsmath}
\usepackage{amscd}
\usepackage[english]{babel} 
\usepackage{amssymb,  xfrac, stmaryrd, blkarray, multirow, enumerate,
  verbatim,xifthen}  
\usepackage{latexsym}
\usepackage{bbold}
\usepackage{graphicx, color, pinlabel}    
\sloppy  
  
\begin{document}
 
\newtheorem{lemma}{Lemma}[section]
\newtheorem{prop}[lemma]{Proposition}
\newtheorem{cor}[lemma]{Corollary}
  
\newtheorem{thm}[lemma]{Theorem}
\newtheorem{Mthm}[lemma]{Main Theorem}
\newtheorem{con}{Conjecture}
\newtheorem{claim}{Claim}
\newtheorem{ques}{Question}

\theoremstyle{definition}
  
\newtheorem{rem}[lemma]{Remark}
\newtheorem{rems}[lemma]{Remarks}
\newtheorem{defi}[lemma]{Definition}
\newtheorem{ex}[lemma]{Example}
\newtheorem*{thx*}{Acknowledgement}
\newtheorem{Q}[lemma]{Question}

\newcommand{\bbA}{{\mathbb A}}
\newcommand{\bbB}{{\mathbb B}}
\newcommand{\bbC}{{\mathbb C}}
\newcommand{\bbD}{{\mathbb D}}
\newcommand{\bbE}{{\mathbb E}}
\newcommand{\bbF}{{\mathbb F}}
\newcommand{\bbG}{{\mathbb G}}
\newcommand{\bbH}{{\mathbb H}}
\newcommand{\bbI}{{\mathbb I}}
\newcommand{\bbJ}{{\mathbb J}}
\newcommand{\bbK}{{\mathbb K}}
\newcommand{\bbL}{{\mathbb L}}
\newcommand{\bbM}{{\mathbb M}}
\newcommand{\bbN}{{\mathbb N}}
\newcommand{\bbO}{{\mathbb O}}
\newcommand{\bbP}{{\mathbb P}}
\newcommand{\bbQ}{{\mathbb Q}}
\newcommand{\bbR}{{\mathbb R}}
\newcommand{\bbS}{{\mathbb S}}
\newcommand{\bbT}{{\mathbb T}}
\newcommand{\bbU}{{\mathbb U}}
\newcommand{\bbV}{{\mathbb V}}
\newcommand{\bbW}{{\mathbb W}}
\newcommand{\bbX}{{\mathbb X}}
\newcommand{\bbY}{{\mathbb Y}}
\newcommand{\bbZ}{{\mathbb Z}}

\newcommand{\cA}{{\mathcal A}}
\newcommand{\cB}{{\mathcal B}}
\newcommand{\cC}{{\mathcal C}}
\newcommand{\cD}{{\mathcal D}}
\newcommand{\cE}{{\mathcal E}}
\newcommand{\cF}{{\mathcal F}}
\newcommand{\cG}{{\mathcal G}}
\newcommand{\cH}{{\mathcal H}}
\newcommand{\cI}{{\mathcal I}}
\newcommand{\cJ}{{\mathcal J}}
\newcommand{\cK}{{\mathcal K}}
\newcommand{\cL}{{\mathcal L}}
\newcommand{\cM}{{\mathcal M}}
\newcommand{\cN}{{\mathcal N}}
\newcommand{\cO}{{\mathcal O}}
\newcommand{\cP}{{\mathcal P}}
\newcommand{\cQ}{{\mathcal Q}}
\newcommand{\cR}{{\mathcal R}}
\newcommand{\cS}{{\mathcal S}}
\newcommand{\cT}{{\mathcal T}}
\newcommand{\cU}{{\mathcal U}}
\newcommand{\cV}{{\mathcal V}}
\newcommand{\cW}{{\mathcal W}}
\newcommand{\cX}{{\mathcal X}}
\newcommand{\cY}{{\mathcal Y}}
\newcommand{\cZ}{{\mathcal Z}}

\newcommand{\fra}{{\mathfrak a}}
\newcommand{\frb}{{\mathfrak b}}
\newcommand{\frc}{{\mathfrak c}}
\newcommand{\frd}{{\mathfrak d}}
\newcommand{\fre}{{\mathfrak e}}
\newcommand{\frf}{{\mathfrak f}}
\newcommand{\frg}{{\mathfrak g}}
\newcommand{\frh}{{\mathfrak h}}
\newcommand{\fri}{{\mathfrak i}}
\newcommand{\frj}{{\mathfrak j}}
\newcommand{\frk}{{\mathfrak k}}
\newcommand{\frl}{{\mathfrak l}}
\renewcommand{\frm}{{\mathfrak m}}
\newcommand{\frn}{{\mathfrak n}}
\newcommand{\fro}{{\mathfrak o}}
\newcommand{\frp}{{\mathfrak p}}

\newcommand{\frr}{{\mathfrak r}}
\newcommand{\frs}{{\mathfrak s}}
\newcommand{\frt}{{\mathfrak t}}
\newcommand{\fru}{{\mathfrak u}}
\newcommand{\frv}{{\mathfrak v}}
\newcommand{\frw}{{\mathfrak w}}
\newcommand{\frx}{{\mathfrak x}}
\newcommand{\fry}{{\mathfrak y}}
\newcommand{\frz}{{\mathfrak z}}

\newcommand{\hh}{{\hat h}}
\newcommand{\frA}{{\mathfrak A}}
\newcommand{\frB}{{\mathfrak B}}
\newcommand{\frC}{{\mathfrak C}}
\newcommand{\frD}{{\mathfrak D}}
\newcommand{\frE}{{\mathfrak E}}
\newcommand{\frF}{{\mathfrak F}}
\newcommand{\frG}{{\mathfrak G}}
\newcommand{\frH}{{\mathfrak H}}
\newcommand{\frI}{{\mathfrak I}}
\newcommand{\frJ}{{\mathfrak J}}
\newcommand{\frK}{{\mathfrak K}}
\newcommand{\frL}{{\mathfrak L}}
\newcommand{\frM}{{\mathfrak M}}
\newcommand{\frN}{{\mathfrak N}}
\newcommand{\frO}{{\mathfrak O}}
\newcommand{\frP}{{\mathfrak P}}
\newcommand{\frQ}{{\mathfrak Q}}
\newcommand{\frR}{{\mathfrak R}}
\newcommand{\frS}{{\mathfrak S}}
\newcommand{\frT}{{\mathfrak T}}
\newcommand{\frU}{{\mathfrak U}}
\newcommand{\frV}{{\mathfrak V}}
\newcommand{\frW}{{\mathfrak W}}
\newcommand{\frX}{{\mathfrak X}}
\newcommand{\frY}{{\mathfrak Y}}
\newcommand{\frZ}{{\mathfrak Z}}

\newcommand{\lra}{\longrightarrow}
 \newcommand{\ra}{\rightarrow}
\newcommand{\Ra}{\Rightarrow}
\newcommand{\La}{\Leftarrow}
\newcommand{\Lra}{\Leftrightarrow}
\newcommand{\hra}{\hookrightarrow}
\newcommand{\eps}{\varepsilon}

\newcommand{\heyIndira}[1]{{\color{RoyalBlue}(#1)}}
\newcommand{\addIndira}[1]{{\color{RoyalBlue}#1}}
\newcommand{\heyAlex}[1]{{\color{OliveGreen} (#1)}}
\newcommand{\addAlex}[1]{{\color{OliveGreen} #1}}

\title[Acylindrical hyperbolicity and CAT(0) cube complexes]{A note on the acylindrical hyperbolicity of groups acting on CAT(0) cube complexes}
\author{Indira Chatterji and Alexandre Martin}
\begin{abstract} We study the acylindrical hyperbolicity of  groups acting by isometries on CAT(0) cube complexes, and obtain simple criteria formulated in terms of stabilisers for the action. Namely, we show that a group acting  essentially and  non-elementarily on a finite dimensional  irreducible CAT(0) cube complex  is acylindrically hyperbolic if there exist  two hyperplanes whose stabilisers intersect along a finite subgroup. We  also give further conditions on the geometry of the complex  so that the result holds if we only require the existence of a single pair of points whose stabilisers intersect along a finite subgroup.\end{abstract}

\maketitle

\section{Introduction}

A group is called \emph{acylindrically hyperbolic} if it is not virtually cyclic and admits an acylindrical action with unbounded orbits on a  hyperbolic geodesic metric space. Acylindrically hyperbolic groups  form a large class of groups, introduced by Osin \cite{OsinAcylindricallyHyperbolic}, displaying strong hyperbolic-like features: They encompass mapping class groups of hyperbolic surfaces \cite{BowditchTightGeodesics}, relatively hyperbolic groups \cite{OsinRelativelyHyperbolicGroups}, the plane Cremona group \cite{CantatLamyCremonaNormalSubgroups, LonjouCremonaAcylindricallyHyperbolic}, and many more. The notion of acylindrical hyperbolicity has gathered a lot of interest in recent years due to the strong algebraic and analytical consequences it implies for the group (we refer to \cite{OsinAcylindricallyHyperbolic} and details therein).

In this article, we study the acylindrical hyperbolicity of  groups acting by isometries on CAT(0) cube complexes. Our goal is to obtain simple  acylindrical hyperbolicity criteria for such groups. Recall that an action on a CAT(0) cube $X$ complex is called \emph{essential} if no orbit remains at bounded distance from a half-space of $X$, and that it is called \emph{non-elementary} if it does not admit a finite orbit in $X\cup \partial_\infty X$.

Our first criterion is formulated in terms of hyperplanes stabilisers, and  generalises to finite dimensional CAT(0) cube complexes a criterion due to Minasyan--Osin for actions on simplicial trees \cite{MinasyanOsinTrees}: 
\begin{thm}\label{Main}
 Let $G$ be a group acting essentially and non-elementarily on an irreducible finite-dimensional CAT(0) cube complex. If there exist two hyperplanes whose stabilisers intersect along a finite subgroup, then $G$ is acylindrically hyperbolic.\end{thm}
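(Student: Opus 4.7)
The plan is to verify Osin's criterion for acylindrical hyperbolicity: since the non-elementary hypothesis prevents $G$ from being virtually cyclic, it suffices to exhibit an action of $G$ on a hyperbolic geodesic space, with unbounded orbits, containing a WPD element. My strategy is to produce a \emph{contracting} isometry $g \in G$ of $X$ whose ``stabiliser modulo $\langle g \rangle$'' is finite; this is well known to yield an acylindrically hyperbolic action either via a Bestvina--Bromberg--Fujiwara projection complex built from $G$-translates of the axis of $g$, or directly on the contact graph of $X$ (which is hyperbolic for finite-dimensional $X$ by work of Hagen).

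The first step is to produce, via the Caprace--Sageev rank-rigidity machinery, a hyperbolic element $g \in G$ with a contracting combinatorial axis whose hyperplane sequence contains translates of both $\hh_1$ and $\hh_2$. The hypotheses of essentiality, non-elementarity, and irreducibility on a finite-dimensional CAT(0) cube complex are exactly those under which the Caprace--Sageev double-skewering lemma applies and which provide the existence of strongly separated pairs of half-spaces. After a preliminary application of non-elementarity to move $\hh_1, \hh_2$ into suitably ``general position'', one can skewer them by a hyperbolic $g$ whose axis $\gamma$ is contracting.

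The critical step is to verify the WPD condition. Contraction of $\gamma$ implies that any $h \in G$ moving a long subsegment of $\gamma$ only a bounded distance must, after multiplication by a suitable power of $g$, coarsely fix that subsegment and therefore stabilise a long sub-sequence of the hyperplanes crossed by $\gamma$. Choosing that subsegment long enough for its hyperplane sequence to contain crossings in the $G$-orbits of both $\hh_1$ and $\hh_2$, we force $h$, up to an element of $\langle g \rangle$, to lie in a conjugate of $\mathrm{Stab}(\hh_1) \cap \mathrm{Stab}(\hh_2)$, which is finite by hypothesis. This is exactly the WPD condition for $g$, and Osin's criterion then yields the theorem.

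The main obstacle will be the dictionary between coarse fixation of a contracting axis and exact stabilisation of two hyperplanes in prescribed $G$-orbits. One must calibrate the contraction constant of $\gamma$ against the combinatorial geometry of the hyperplanes it crosses, so that a small displacement along $\gamma$ really does force simultaneous stabilisation of two well-chosen hyperplanes, and one must carefully handle the $\langle g \rangle$-ambiguity intrinsic to the WPD definition. A subsidiary point is to arrange, by judicious use of non-elementarity, that the axis of $g$ genuinely traverses translates of both $\hh_1$ and $\hh_2$ (and not only translates of one of them), which is what makes the finiteness of $\mathrm{Stab}(\hh_1) \cap \mathrm{Stab}(\hh_2)$ applicable rather than the potentially infinite $\mathrm{Stab}(\hh_i) \cap \mathrm{Stab}(g \hh_i)$.
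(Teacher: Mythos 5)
Your overall strategy (produce a contracting double-skewering element and verify the WPD condition directly, then invoke Osin/Bestvina--Bromberg--Fujiwara) is a genuinely different route from the paper, which instead uses the criterion of \cite{MartinTame}: it builds an \emph{\"uber-contraction} $g$ whose checkpoints are the $g$-translates of the finite bridge between an \"uber-separated pair $\hat h_1 \subset g\hat h_1$, and then only has to verify the much weaker condition that the \emph{exact} pointwise stabiliser of a pair $\{s, g^m s\}$ is finite for $s$ in a checkpoint. The difference is not cosmetic: the step you yourself label as critical contains a genuine gap. From $d(x,hx), d(y,hy) \le r$ for $x,y$ far apart on the axis you cannot conclude that $h$ ``stabilises a long sub-sequence of the hyperplanes crossed by $\gamma$''. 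What you do get is that $h$ maps the set of hyperplanes separating $x$ from $y$ to a set agreeing with it up to at most $2r$ elements; but $h$ may permute this nearly-invariant family without fixing any member of it, and since $X$ is not assumed locally finite and the action is not assumed proper, nothing bounds the number of such $h$. This is exactly the obstruction the paper's machinery is built to circumvent: the finiteness of the checkpoints (bridges between strongly separated hyperplanes are finite intervals, Lemma \ref{uber}) together with the fact that \emph{every} geodesic between suitably placed points must pass through them (Lemma \ref{key}, which requires \"uber-separation, i.e.\ distance at least $4$ in the crossing graph) is what converts a coarse displacement bound into exact fixation of points, after which a stabiliser hypothesis can be applied. ``Calibrating the contraction constant'' of an ordinary contracting axis will not close this gap.

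Two further points would need attention even if the WPD step were repaired. First, the theorem allows $\hat h_1$ and $\hat h_2$ to be transverse or even equal; the paper reduces to the disjoint case via the Sector Lemma, producing a disjoint, strongly separated pair whose stabilisers still intersect along a finite subgroup, and your ``general position'' remark does not supply this reduction. Second, for the finiteness hypothesis to apply you need $h$ to stabilise $a\hat h_1$ and $a\hat h_2$ for the \emph{same} $a\in G$; stabilising $a\hat h_1$ and $b\hat h_2$ with $a\neq b$ only places $h$ in a conjugate of $\mbox{Stab}(c\hat h_1)\cap\mbox{Stab}(\hat h_2)$ for some $c$, about which the hypothesis says nothing. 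The paper arranges this by taking the basepoint on the bridge between $\hat h_1$ and $\hat h_2$, so that any geodesic from $x$ to $g^2x$ crosses both $g\hat h_1$ and $g\hat h_2$.
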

In the previous theorem, we do not require the two hyperplanes to be disjoint, or even distinct. In particular, the conclusion holds if there exists a  hyperplane of $X$ whose stabiliser is \textit{weakly malnormal}, i.e. it intersects some conjugate along a finite subgroup.

It should be noted that the above theorem does not reduce to the aforementioned criterion of Minasyan--Osin, as groups acting on CAT(0) cube complexes do not virtually act on a simplicial tree a priori. 

Anthony Genevois has also obtained criteria for acylindrical hyperbolicity of a similar flavour, using different tools. In particular, his approach can be used to recover Theorem \ref{Main}, see \cite[Remark 21]{GenevoisAcylindricityHyperplanes}.

We give an application of Theorem \ref{Main} to Artin groups of FC type, suggested to us by Ruth Charney. Artin groups span a large range of groups, and include for instance free groups, braid groups, free abelian groups, as well as many more exotic groups. Artin groups and their subgroups are a rich source of examples and counterexamples of interesting phenomena in geometry and group theory.
\begin{thm}\label{Ruth} Non-virtually cyclic Artin groups of FC type whose underlying Coxeter graphs have diameter at least $3$ are acylindrically hyperbolic.\end{thm}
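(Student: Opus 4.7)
The plan is to apply Theorem \ref{Main} to $A_\Gamma$ acting on a natural finite-dimensional CAT(0) cube complex $X$. For FC-type Artin groups such a cube complex can be constructed from the parabolic structure (for instance via a Sageev-type cubulation, or from the modified Deligne complex, which is CAT(0) by Charney--Davis). We first reduce to the case where the Coxeter graph $\Gamma$ is connected: if $\Gamma$ is disconnected, $A_\Gamma$ is a non-trivial free product of infinite torsion-free Artin groups, and hence acylindrically hyperbolic whenever non-virtually-cyclic (for instance by applying Theorem \ref{Main} to the Bass--Serre tree of the splitting, where edge stabilisers are trivial).

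Assuming $\Gamma$ is connected, standard arguments establish that the action of $A_\Gamma$ on $X$ is essential and non-elementary and that $X$ is irreducible, using the non-virtually-cyclic hypothesis. By the remark following Theorem \ref{Main}, it then suffices to exhibit a hyperplane of $X$ with weakly malnormal stabiliser. We pick generators $a, b \in S$ at distance $\geq 3$ in $\Gamma$; this is possible by the diameter hypothesis and entails that $a$ and $b$ commute and that no generator $c \in S$ is non-commuting with both. The stabiliser of the hyperplane $H$ dual to $a$ is the standard parabolic $P_a$ generated by $a$ together with all generators commuting with $a$; we produce $g \in A_\Gamma$ such that $P_a \cap g P_a g^{-1}$ is trivial, using parabolic intersection results for FC-type Artin groups (Godelle and collaborators), which ensure that such an intersection is always itself a parabolic.

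The main obstacle is constructing $g$ and verifying that $P_a \cap g P_a g^{-1}$ is trivial. Naive choices fail: $g = b$ yields $g P_a g^{-1} = P_a$ (since $b \in P_a$), while a single generator $c$ non-commuting with $a$ gives an intersection possibly still containing elements commuting both with $a$ and with $c a c^{-1}$. The element $g$ must combine $b$ with a suitable neighbour of $a$ in $\Gamma$; the distance $\geq 3$ hypothesis is precisely what forces the resulting parabolic intersection to correspond to the empty subset of $S$, and hence to be trivial since $A_\Gamma$ is torsion-free. This final combinatorial verification — that the diameter condition on $\Gamma$ exactly matches what is needed to witness weak malnormality at the level of parabolic intersections — is the delicate part of the argument.
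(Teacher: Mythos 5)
There is a genuine gap at the heart of your argument: the step you yourself flag as ``the delicate part'' --- constructing $g$ and verifying that $P_a \cap g P_a g^{-1}$ is trivial --- is never carried out. You explain why two naive choices of $g$ fail and assert that the diameter condition ``is precisely what forces'' the conjugated intersection to be the empty parabolic, but no candidate $g$ is produced and no verification is given. The external input you invoke (that intersections of parabolics in FC-type Artin groups are again parabolic) is a substantial theorem and, even granted, only tells you the intersection is \emph{some} parabolic, not the trivial one; the combinatorial work of identifying it is exactly what is missing. You have also made the problem harder than necessary by restricting to the weak-malnormality special case of Theorem \ref{Main}. The theorem allows two \emph{distinct} hyperplanes, with no conjugation: in the Deligne complex $\cD_A$ the stabiliser of the hyperplane $\hh_s$ dual to the edge between $\{1\}$ and $A_{\{s\}}$ is the special subgroup $A_{lk(s)}$ (note: the link, not the star as in your description of $P_a$), so choosing $s,t$ at distance at least $3$ in $\Gamma$ gives $lk(s)\cap lk(t)=\emptyset$ and hence $\mathrm{Stab}(\hh_s)\cap\mathrm{Stab}(\hh_t)=A_{lk(s)}\cap A_{lk(t)}=A_\emptyset=\{1\}$ by van der Lek's result on special subgroups --- no deep parabolic-intersection theory and no search for a conjugating element are needed.

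A second, smaller but real, issue is that you dismiss the verification that $\cD_A$ is irreducible and that the action is essential and non-elementary as ``standard arguments'' following from the non-virtually-cyclic hypothesis. In fact these properties are derived from the diameter $\geq 3$ hypothesis (via the labelling map $lk(\{1\})\to\Gamma$ for irreducibility, explicit convex tree-like subcomplexes for essentiality, and a rank-one element $s_0s_3$ built from a geodesic $s_0,s_1,s_2,s_3$ of $\Gamma$ together with a Flat Half-Strip argument to exclude finite orbits at infinity). This is the bulk of the work --- the action is far from proper, since vertex stabilisers are the conjugates of the $A_T$ --- and non-elementarity in particular cannot be waved through: an earlier version of this very argument had a gap precisely there.
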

Artin groups of FC type and the proof of the above theorem will be discussed in Section \ref{Artin}. In the case of Artin groups of FC type whose Coxeter graphs have diameter $1$, i.e. Artin groups of finite type, Calvez and Wiest \cite{CalvezWiest} showed, using different techniques, that the quotient of such groups by their centre is acylindrically hyperbolic.

As a corollary of Theorem \ref{Main}, we obtain the following results for actions satisfying a weak notion of acylindricity or properness: 
\begin{cor}\label{MainAcyl}
Let $G$ be a group acting essentially and non-elementarily on an irreducible finite-dimensional CAT(0) cube complex $X$. Assume that the action is \textit{non-uniformly weakly acylindrical}, that is, there exists a constant $L\geq 0$ such that only finitely many elements  of $G$ fix two points of $X$ at distance at least $L$. Then $G$ is  acylindrically hyperbolic. 
\end{cor}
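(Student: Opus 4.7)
The plan is to deduce this corollary from Theorem \ref{Main} by exhibiting two hyperplanes of $X$ whose stabilisers intersect along a finite subgroup; the natural candidates are \emph{strongly separated} hyperplanes lying far apart, i.e.\ pairs of disjoint hyperplanes such that no third hyperplane of $X$ crosses both.

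The first step is to produce such a pair. Since $G$ acts essentially and non-elementarily on the irreducible finite-dimensional cube complex $X$, the Caprace--Sageev machinery provides a contracting (rank-one) hyperbolic isometry $g \in G$, together with a hyperplane $H_0$ skewered by $g$. The translates $g^n H_0$ then form a chain of pairwise disjoint hyperplanes along the axis of $g$ whose mutual distances grow linearly in $n$, and the contracting property guarantees that for $n$ large enough the hyperplanes $H_1 := H_0$ and $H_2 := g^n H_0$ are strongly separated and satisfy $d(H_1, H_2) \geq L$.

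The next step is to show that such a pair has almost trivial joint hyperplane stabiliser. The key geometric fact is that the combinatorial projection of $H_2$ onto the side of the carrier of $H_1$ facing $H_2$ reduces to a single vertex $v_1$: if it contained two vertices joined by an edge $e$, the hyperplane of $X$ dual to $e$ would cross both $H_1$ and $H_2$, contradicting strong separation. By symmetry one obtains a single vertex $v_2$ on the carrier of $H_2$, with $d(v_1,v_2) \geq d(H_1,H_2) \geq L$. Any $h \in \mathrm{Stab}(H_1) \cap \mathrm{Stab}(H_2)$ must preserve each halfspace of $H_1$---otherwise $hH_2$ would lie on the opposite side of $H_1$ from $H_2$, contradicting $hH_2 = H_2$---and likewise for $H_2$; since these projections are canonically defined from the data $(H_1, H_2)$ together with a choice of halfspace, $h$ preserves them, and being singletons they are fixed pointwise.

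Consequently every element of $\mathrm{Stab}(H_1) \cap \mathrm{Stab}(H_2)$ fixes two points of $X$ at distance at least $L$, and the non-uniform weak acylindricity hypothesis forces this intersection to be finite. Theorem \ref{Main} then yields the acylindrical hyperbolicity of $G$. The main obstacle in carrying out this plan is the first step: arranging strongly separated hyperplanes arbitrarily far apart, which genuinely uses the contracting behaviour of rank-one elements rather than just the existence of hyperbolic isometries on $X$.
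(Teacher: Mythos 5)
Your proposal is correct and follows essentially the same route as the paper: produce a strongly separated pair of hyperplanes at distance at least $L$ by skewering, observe that their common stabiliser fixes two points at distance at least $L$ (the paper uses the gates of the finite bridge from Lemma \ref{uber}, you use the single-vertex combinatorial projections --- the same fact), deduce finiteness from non-uniform weak acylindricity, and invoke Theorem \ref{Main}. The only soft spot is the first step you yourself flag: rather than appealing to the contracting property of a rank-one isometry, the paper makes this immediate by first applying the Strong Separation Lemma \ref{CSstuff} to get a strongly separated nested pair $h_1 \subset h_2$ and then the Double-Skewering Lemma to get $g$ with $h_1 \subset h_2 \subset g h_1$, after which $h_1$ and $g^n h_1$ are automatically strongly separated (indeed \"uber-separated) and at distance greater than $L$ for $n$ large.
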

\begin{cor}\label{MainProper}
A group acting essentially, non-elementarily, and with finite vertex stabilisers on a finite dimensional irreducible CAT(0) cube complex is acylindrically hyperbolic.
\end{cor}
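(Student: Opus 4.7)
The plan is to deduce this corollary from Theorem \ref{Main} by exhibiting two hyperplanes $H_1, H_2$ of $X$ whose stabilisers intersect along a finite subgroup. Since the essentiality, non-elementarity, and irreducibility hypotheses already match those of Theorem \ref{Main}, and since every vertex stabiliser is finite, it is enough to find $H_1, H_2$ such that every element of $G_{H_1} \cap G_{H_2}$ fixes some specific vertex of $X$.

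To produce such a pair I would invoke the rank-rigidity theory of Caprace--Sageev: an essential, non-elementary action on an irreducible finite-dimensional CAT(0) cube complex admits a pair of \emph{strongly separated} hyperplanes, that is, two disjoint hyperplanes $H_1, H_2$ such that no hyperplane of $X$ crosses both. One can either cite this directly from their rank-rigidity paper, or derive it by taking $H_1$ to be a hyperplane skewered by a rank-one isometry $g$ and setting $H_2 = g^n H_1$ for $n$ large. The strong separation forces the combinatorial interval between $H_1$ and $H_2$ to collapse to a single combinatorial geodesic (the ``bridge''); equivalently, the combinatorial projection of the carrier of $H_2$ onto that of $H_1$ is a single vertex $p_1$.

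Now any $h \in G_{H_1} \cap G_{H_2}$ preserves both carriers setwise and preserves combinatorial distances, so it preserves the singleton consisting of the unique vertex of the carrier of $H_1$ realising the minimal distance to the carrier of $H_2$; thus $h p_1 = p_1$. Consequently $G_{H_1} \cap G_{H_2} \subseteq G_{p_1}$, which is finite by hypothesis, and Theorem \ref{Main} delivers the acylindrical hyperbolicity of $G$. The main technical ingredients — the existence of strongly separated hyperplanes for such actions, and the uniqueness of their bridge projection — are both standard in CAT(0) cube complex theory, so I do not anticipate a substantive obstacle beyond quoting the correct references.
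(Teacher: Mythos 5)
Your proposal is correct and reduces to Theorem \ref{Main} in essentially the way the paper does, but by a slightly more direct route. The paper deduces Corollary \ref{MainProper} from Corollary \ref{MainAcyl} (finite vertex stabilisers give non-uniform weak acylindricity with $L=0$), and the proof of Corollary \ref{MainAcyl} needs an extra step: after producing two strongly separated hyperplanes via the Strong Separation Lemma, it invokes the Double-Skewering Lemma to replace them by $\hat h_1$ and $g^n\hat h_1$ whose bridge is \emph{longer than} $L$, so that the common stabiliser virtually fixes a path of length $L$. In the vertex-stabiliser case that whole step is unnecessary, and your argument exploits exactly that: for a strongly separated pair the gates of the bridge are a \emph{unique} pair of vertices (Lemma \ref{uber}, quoted from Chatterji--Fern\'os--Iozzi), the common stabiliser of the two hyperplanes preserves this pair and hence fixes a vertex (at worst up to index $2$, if one worries about halfspace-swapping --- in fact an element stabilising both hyperplanes cannot swap the halfspaces of either, since each hyperplane lies entirely in one halfspace of the other), so it is finite. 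Two small imprecisions, neither fatal: the uniqueness statement in the literature is about the closest pair between the \emph{halfspaces} $h_1$ and $h_2^*$, not between the carriers as you phrase it (the projection between carriers is an edge or a pair of vertices rather than a single vertex, but the finiteness conclusion is unaffected); and you should make sure the strong separation you quote from Caprace--Sageev is applied under the paper's standing hypotheses (essential, non-elementary, irreducible, finite-dimensional), which it is. Net effect: your proof is marginally shorter than the paper's, at the cost of not yielding the more general Corollary \ref{MainAcyl} along the way.
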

Corollary \ref{MainAcyl} was already known if the complex is in addition assumed to be \textit{hyperbolic} (see \cite{MartinHigmanAcylindrical} for the proof in dimension $2$ and \cite{GenevoisConeOff} for the general case). 
Corollary \ref{MainProper} was already known if  in addition 
 the action is assumed to be metrically proper, by work of Caprace--Sageev on the existence of rank one isometries of  CAT(0) cube complexes \cite{CapraceSageevRankRigidity}. 
Anthony Genevois informed us that he found independently similar results, using different techniques \cite{GenevoisAcylindricityHyperplanes}.

In Theorem \ref{Main} and its corollaries, the essentiality assumption could be weakened to the assumption that the \textit{essential core} is irreducible, a condition that is a priori non-trivial to check. Notice however that the other assumptions cannot be removed. The non-acylindrically hyperbolic group  $\bbZ$ acts properly and essentially, but \textit{elementarily}, on the real line with its standard simplicial structure. The groups of Burger--Mozes provide examples of cocompact lattices in the product of two trees whose associated actions are essential, non-elementary, and without fixed point at infinity, yet these groups are not acylindrically hyperbolic as they are simple \cite{BurgerMozesLatticesTrees}. Thompson's groups $V$ and $T$  act properly, non-elementarily, and without fixed point at infinity on an \textit{infinite dimensional} irreducible CAT(0) cube complex \cite{FarleyBoundaryThompson}, but are not acylindrically hyperbolic as they are simple. \\

We also obtain stronger criteria, that allow us to deduce the acylindrical hyperbolicity of a group from information on the stabilisers of a single pair of points (see Theorem \ref{MainCubesSeparated} for the general statement). For this, we impose further conditions on the complex: We say that a CAT(0) cube complex is \textit{cocompact} if its automorphism group acts cocompactly on it, and we say that it has \textit{no free face} if every non-maximal cube is contained in at least two maximal cubes. We prove the following: 
\begin{thm}\label{MainVert}
Let $G$ be a group acting essentially and non-elementarily on an irreducible finite-dimensional cocompact CAT(0) cube complex with no free face. If there exist two points whose stabilisers intersect along a finite subgroup, then $G$ is acylindrically hyperbolic.
\end{thm}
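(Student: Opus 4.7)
The plan is to deduce Theorem \ref{MainVert} from Theorem \ref{Main} by upgrading the hypothesis from point stabilisers to hyperplane stabilisers. Given two points $x$ and $y$ with $\mathrm{Stab}(x) \cap \mathrm{Stab}(y)$ finite, I aim to exhibit two hyperplanes $H_x, H_y$ with $\mathrm{Stab}(H_x) \cap \mathrm{Stab}(H_y)$ finite, and then invoke Theorem \ref{Main}.

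First, I would reduce to a convenient pair of vertices. Since $X$ is finite-dimensional and cocompact, it is locally finite, so $\mathrm{Stab}(x)$ permutes the finitely many top-dimensional cubes containing $x$; replacing $x$ by the barycentre of such a cube, and doing likewise for $y$, I may assume $x$ and $y$ are stabilised, up to finite index, by the stabiliser of a maximal cube containing them. I would then choose $H_x$ dual to an edge of that cube at $x$ (and similarly $H_y$), so that a finite-index subgroup of $\mathrm{Stab}(x)$ lies in $\mathrm{Stab}(H_x)$, and likewise for $y$. This immediately gives $\mathrm{Stab}(x)\cap\mathrm{Stab}(y) \subseteq \mathrm{Stab}(H_x)\cap\mathrm{Stab}(H_y)$ up to finite index; the substantive task is to show the reverse containment holds modulo a finite subgroup.

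The heart of the argument is to control the "extra" elements of $\mathrm{Stab}(H_x)\cap\mathrm{Stab}(H_y)$, namely those that translate $x$ nontrivially along the carrier $N(H_x)$ and $y$ along $N(H_y)$. Here the no-free-face condition plays a crucial role: every hyperplane has a thick carrier, full-dimensional on both sides, so its stabiliser acts cocompactly on it in a structured way rather than through pathological symmetries of a low-dimensional subcomplex. Combined with cocompactness of $\mathrm{Aut}(X)$ (which gives a bound on the number of orbits of configurations in $X$), a fundamental-domain argument should express any such element as one of finitely many cosets of $\mathrm{Stab}(x)\cap\mathrm{Stab}(y)$, since $g$ must satisfy $gx\in N(H_x)$, $gy\in N(H_y)$, and $d(gx,gy)=d(x,y)$.

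The main obstacle is ruling out "diagonal" sliding elements that translate $x$ along $H_x$ and $y$ along $H_y$ in a coherent fashion. An element of $\mathrm{Stab}(H_x)$ alone may translate $x$ arbitrarily far along $H_x$, so the finiteness must come from the interaction of the two hyperplanes. I expect the argument to require a careful choice of $H_x$ and $H_y$—perhaps, as suggested by the more general Theorem \ref{MainCubesSeparated} referenced in the excerpt, a whole configuration of cubes rather than a single pair of hyperplanes—so that their carriers are sufficiently transverse or separated that no global isometry preserves both with room to translate $x$ and $y$ simultaneously. Once this rigidity is established, the finiteness of $\mathrm{Stab}(x)\cap\mathrm{Stab}(y)$ propagates to $\mathrm{Stab}(H_x)\cap\mathrm{Stab}(H_y)$, and Theorem \ref{Main} yields acylindrical hyperbolicity.
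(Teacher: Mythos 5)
Your overall strategy---reduce to Theorem \ref{Main} by producing two \emph{hyperplanes} with finite common stabiliser---is the right one, and you have correctly isolated the obstruction: a hyperplane adjacent to $x$ has a stabiliser that may translate $x$ arbitrarily far along the carrier. But your proposal stops exactly at that obstruction. The passage ``I expect the argument to require a careful choice of $H_x$ and $H_y$ \dots so that their carriers are sufficiently transverse or separated'' is a placeholder for the actual key idea, which is missing; as written, there is no mechanism that kills the sliding elements, and a fundamental-domain count cannot supply one, since cocompactness of $\mathrm{Aut}(X)$ gives no control over a single point stabiliser. (A side error: finite dimension plus cocompactness does not imply local finiteness---an infinite-valence regular tree is a counterexample---though this step is inessential, since the stabiliser of any cube virtually fixes it pointwise.)

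The paper resolves the difficulty by \emph{not} taking the hyperplanes adjacent to $x$ and $y$. Let $C\ni x$ and $C'\ni y$ be maximal cubes, so $\mathrm{Stab}(C)\cap\mathrm{Stab}(C')$ is virtually contained in $\mathrm{Stab}(x)\cap\mathrm{Stab}(y)$ and hence finite, and let $\cH_{C,C'}$ be the hyperplanes dual to edges of $C\cup C'$; by maximality, $\bigcap_{\hat k\in\cH_{C,C'}}\mathrm{Stab}(\hat k)\subseteq \mathrm{Stab}(C)\cap\mathrm{Stab}(C')$. The geometric input (Proposition \ref{cubes_separated}) is that one can find two hyperplanes $\hat h,\hat h'$, far from $C$ and $C'$, such that \emph{every} hyperplane of $\cH_{C,C'}$ separates $\hat h$ from $\hat h'$; this is where the no-free-face and cocompactness hypotheses enter, via geodesic completeness and the Strong Sector Lemma \ref{strongsector}, which guarantees a hyperplane inside each sector cut out by the pairwise-crossing hyperplanes of a maximal cube. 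After upgrading $\hat h,\hat h'$ to a strongly separated pair (Lemma \ref{CSstuff}), the bridge between them is finite with unique gates (Lemma \ref{uber}), so $\mathrm{Stab}(\hat h)\cap\mathrm{Stab}(\hat h')$ virtually fixes pointwise a geodesic crossing every hyperplane of $\cH_{C,C'}$, hence virtually lies in $\bigcap_{\hat k\in\cH_{C,C'}}\mathrm{Stab}(\hat k)$, which is finite. Strong separation is precisely what eliminates the sliding you were worried about: the common stabiliser of a strongly separated pair must preserve a canonical finite set. Without this (or an equivalent) idea, your argument does not go through.
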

To show the acylindrical hyperbolicity of a group  $G$ through its action on a geodesic metric space $X$, a useful criterion introduced by Bestvina-Bromberg-Fujiwara \cite[Theorem H]{BestvinaBrombergFujiwara} is to find an infinite order element of the group whose orbits are strongly contracting and which satisfies the so-called \textit{WPD condition}. 
However, this condition, formulated in terms of \textit{coarse} stabilisers of pairs of points, is generally cumbersome to check for actions on non-locally finite spaces. In \cite{MartinTame}, the second author introduced a different criterion involving a weakening of this condition formulated purely in terms of stabilisers of pairs of points, making it much more tractable (see Theorem \ref{prop:General_Criterion_acylindrically_hyperbolic} for the exact formulation). The price to pay is to find group elements satisfying a strengthened notion of contraction of their orbits, called \textit{\"uber-contractions}. The second author showed that such contractions abound for actions on non-locally compact spaces under mild assumptions on the stabilisers of vertices, and used it to show the acylindrical hyperbolicity of the tame automorphism group of an affine quadric threefold, a subgroup of the Cremona group Bir$(\bbP^3(\bbC)) $. In this article, we provide a different way to construct \"uber-contractions for groups acting on CAT(0) cube complexes, this time using the very rich combinatorial geometry of their hyperplanes. This construction relies heavily on the existence of hyperplanes with strong separation properties, called \textit{\"uber-separated hyperplanes}, and introduced by Chatterji--Fern\'os--Iozzi \cite{CFI}. Such hyperplanes were used to prove a superrigidity phenomenon for groups acting non-elementarily on CAT(0) cube complexes.

\begin{thx*} The authors warmly thank Talia Fern\'os for producing the example in Remark \ref{Talia}, Michah Sageev for discussions related to Proposition \ref{cubes_separated}, Carolyn Abbott for noticing Theorem \ref{Ruth} and Ruth Charney for pointing it out to us, along with the explanations of Section \ref{Artin}. We also thank Anthony Genevois for remarks on an early version of this article, Ruth Charney and Rose Morris-Wright for spotting a gap in the proof of Theorem \ref{Ruth},  and the anonymous referee for their interesting comments.

The first author is partially supported by the Institut Universitaire de France (IUF) and the ANR GAMME. The second author is partially supported by the Austrian Science Fund (FWF) grant M1810-N25. This work was completed while the two authors were in residence at MSRI during the Fall 2016 ``Geometric Group Theory'' research program, NSF Grant DMS-1440140. They  thank the organisers for the opportunity to work in such a stimulating environment. \end{thx*}

\section{\"Uber-contractions and  acylindrical hyperbolicity} 

Recall that, given a group $G$ acting on a geodesic metric space $X$, the action is called \textit{acylindrical} if for every $r \geq 0$ there exist constants $L(r), N(r) \geq 0$ such that for every points $x, y$ of $X$ at distance at least $L(r)$, 

$$|\{g\in G | d(x,gx)\leq r,\ d(y,gy)\leq r\}| \leq N(r).$$
A group is \textit{acylindrically hyperbolic} if it is not virtually cyclic and admits an acylindrical action with unbounded orbits on a hyperbolic geodesic metric space. Given a group action on an arbitrary geodesic space, the following criterion was introduced by Bestvina--Bromberg--Fujiwara to prove the acylindrical hyperbolicity of the group: 

\begin{thm}[{\cite[Theorem H]{BestvinaBrombergFujiwara}}]
 Let $G$ be a group acting by isometries on a geodesic metric space $X$. Let $g$ be a group  element of infinite order with quasi-isometrically embedded orbits,  and assume that the following holds: 
 \begin{itemize}
  \item the group element $g$ is \textit{strongly contracting}, that is, there exists a point $x$ of $X$ such that the closest-point projections on the  $\langle g \rangle$-orbit of $x$  of the balls of $X$ that are disjoint from  $\langle g \rangle x$ have uniformly bounded diameter,
  \item the group element $g$ satisfies the \textit{WPD condition}, that is, for every $r \geq 0$ 
  and every point $x$ of $X$, there exists an integer $m$ such that   only finitely many elements $h$ of $G$ satisfy $d(x, hx), d(g^{m}x,hg^{m}x) \leq r$.
 \end{itemize}
Then $G$ is either virtually cyclic or acylindrically hyperbolic.\qed
\end{thm}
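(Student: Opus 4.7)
The plan is to construct a hyperbolic geodesic metric space on which $G$ acts acylindrically with $g$ having unbounded orbits, and then invoke the definition of acylindrical hyperbolicity (excluding the virtually cyclic alternative as a separate case when $G$ itself happens to be virtually $\langle g\rangle$). The natural target space is a quasi-tree built from a projection complex, following the technology introduced by Bestvina--Bromberg--Fujiwara.

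First I would consider the collection $\mathbb{Y}$ of $G$-translates of the quasi-axis of $g$, namely the $\langle g\rangle$-orbit of a basepoint, indexed by cosets of the stabiliser of the pair of endpoints of the axis. For distinct $Y, Z \in \mathbb{Y}$, I would define $\pi_Y(Z)$ as the closest-point projection of $Z$ onto $Y$ and set $d_Y(Z_1, Z_2) := \text{diam}\bigl(\pi_Y(Z_1) \cup \pi_Y(Z_2)\bigr)$. Using the strongly contracting hypothesis together with the quasi-isometric embedding of $\langle g\rangle$-orbits, the goal is to verify the bounded projection axiom: there is a uniform constant $\xi \geq 0$ such that for every triple of distinct $X, Y, Z \in \mathbb{Y}$, at most one of $d_Y(X,Z)$, $d_X(Y,Z)$, $d_Z(X,Y)$ exceeds $\xi$. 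Once this is in place, the Bestvina--Bromberg--Fujiwara projection complex machinery produces, for a sufficiently large threshold $K$, a quasi-tree $\mathcal{C}_K(\mathbb{Y})$ on which $G$ acts by isometries, with $g$ acting loxodromically.

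The final step is to promote the action on this quasi-tree to an acylindrical one. The WPD condition on $g$ should translate, via the bounded projection geometry, into acylindricity of the action on $\mathcal{C}_K(\mathbb{Y})$: a group element coarsely fixing two far-apart points along the axis of $g$ in the quasi-tree must coarsely stabilise a long subsegment of the axis in $X$ itself, which WPD bounds to finitely many elements. The main obstacle, in my view, lies in verifying the bounded projection axiom. The strongly contracting hypothesis a priori only controls projections onto a \emph{single} axis, so one must rule out configurations where the projections of two translates $Z_1, Z_2$ onto $Y$ both span long subsegments while $Z_1$ and $Z_2$ remain far apart in $X$. Doing so requires a delicate interplay between the Morse-type geometry coming from strong contraction and the quasi-convexity of cyclic orbits supplied by the quasi-isometric embedding assumption; this combinatorial-geometric bookkeeping is the technical heart of the argument.
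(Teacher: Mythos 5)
First, a point of comparison: the paper does not prove this statement at all --- it is quoted verbatim from Bestvina--Bromberg--Fujiwara (Theorem H of their quasi-tree paper) and stated with a \qed, so there is no internal proof to measure your attempt against. Your sketch does follow the actual strategy of the cited proof: take the $G$-translates of the contracting quasi-axis, equip them with closest-point projections, verify the projection axioms, and feed the family into the projection-complex/quasi-tree-of-spaces machinery. On that level the outline is the right one.

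As a proof, however, it has two genuine gaps. The first you name yourself: the verification of the projection axioms (not only the ``at most one large projection'' inequality you state, but also finiteness of $\{Y : d_Y(X,Z) > \xi\}$ for fixed $X,Z$) is exactly where the strongly contracting hypothesis does all the work, and you defer it entirely; a proof that stops at ``this bookkeeping is the technical heart'' has not proved the theorem. The second gap is in the endgame. WPD of the single element $g$ does not yield acylindricity of the full $G$-action on $\mathcal{C}_K(\mathbb{Y})$, and one should not try to get it: the correct conclusion to aim for is that $g$ remains loxodromic and WPD for the induced action on the hyperbolic space $\mathcal{C}_K(\mathbb{Y})$ (equivalently, that the maximal elementary subgroup $E(g)$ is virtually cyclic and hyperbolically embedded, \`a la Dahmani--Guirardel--Osin), after which Osin's equivalence between ``admits a non-elementary action with a loxodromic WPD element on a hyperbolic space'' and ``acylindrically hyperbolic'' finishes the argument, with the virtually cyclic case split off exactly when $E(g) = G$. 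Your final paragraph conflates WPD on the quasi-tree with acylindricity of the whole action, and the claimed implication ``coarsely fixing two far-apart points in the quasi-tree forces coarse stabilisation of a long subsegment of the axis in $X$'' is only justified for points on the image of the axis of $g$, which is what WPD (not acylindricity) requires.
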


The following notion, introduced in \cite{MartinTame}, provides an easier way to prove the acylindrical hyperbolicity of a group.

\begin{defi}\label{def:ubercontraction}
Let $X$ be a geodesic metric space and let  $h$ be  an isometry of $X$ with quasi-isometrically embedded orbits. A {\it system of checkpoints} for $h$ is the data of a finite subset $S$ of $X$, the \textit{checkpoint},  as well as an \textit{error constant} $L \geq 0$ and a quasi-isometry $f: \Lambda:=\bigcup_{i \in \bbZ} h^iS \ra \bbR$ such that we have the following:

Let $x, y$ be points of $X$ and let $x', y'$ be closest-point projections on $\Lambda$ of $x, y$ respectively. For every \textit{checkpoint} $S_i := h^i S, i \in \bbZ$, such that:
\begin{itemize}
	\item $S_i$ \textit{coarsely separates} $x'$ and $y'$ , i.e. $f(x')$ and $f(y')$ lie in different unbounded connected components of $\bbR \setminus f(S_i)$,
	\item $S_i$ is at distance at least $L$ from both $x'$ and $y'$,
\end{itemize}
then every geodesic between $x$ and $y$ meets $S_i$.

A hyperbolic isometry $h$ of $X$ is \textit{\"uber-contracting}, or is an \textit{\"uber-contraction}, if it admits such a system of checkpoints.
\end{defi}

We will be using the following criterion for acylindrical hyperbolicity:

\begin{thm}[Theorem 1.2 of \cite{MartinTame}]\label{prop:General_Criterion_acylindrically_hyperbolic}
	Let $G$ be a group acting by isometries on a geodesic metric space $X$. Let $g \in G$ be an infinite order element such that the following holds: 
	\begin{itemize}
		\item[$(i)$] the group element $g$ is  \"uber-contracting with  respect to a  system of checkpoints $(g^iS)_{i \in \bbZ} $,
		\item[$(ii)$] There exists a constant $m_0$ such that for every point $s \in S$ and every $m \geq m_0$, only finitely many elements of $G$ fix pointwise $s$ and $g^{m}s$.
	\end{itemize}
	Then $G$ is either virtually cyclic or acylindrically hyperbolic.\qed
\end{thm}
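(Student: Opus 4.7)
The plan is to verify the two hypotheses of the Bestvina--Bromberg--Fujiwara criterion for the element $g$, namely that $g$ is strongly contracting and satisfies the WPD condition. Quasi-isometric embedding of orbits is built into the definition of an \"uber-contraction, so acylindrical hyperbolicity then follows directly from \cite{BestvinaBrombergFujiwara}.

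For strong contraction, I fix a basepoint $x \in \Lambda$ and a ball $B(y,r)$ disjoint from $\Lambda = \bigcup_{i\in\bbZ} g^iS$. Given any two points $p,q \in B(y,r)$ with closest-point projections $p',q'$ on $\Lambda$, I argue by contradiction that $|f(p')-f(q')|$ is bounded in terms of $r$. If $|f(p')-f(q')|$ is large, the quasi-isometry $f:\Lambda \to \bbR$ produces a checkpoint $S_i$ whose $f$-image lies strictly between $f(p')$ and $f(q')$ and at $f$-distance at least the error constant $L$ from each. The \"uber-contraction property forces the geodesic $[p,q]$, which has length at most $2r$, to meet $S_i$ at some point $z \in \Lambda$. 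Since $p'$ is the closest point of $\Lambda$ to $p$, we obtain $d(p,p') \leq d(p,z) \leq 2r$, and similarly $d(q,q')\leq 2r$, so $d(p',q') \leq 6r$ in $X$; applying the quasi-isometry constants of $f$ bounds $|f(p')-f(q')|$ in terms of $r$ alone. Hence the projection of $B(y,r)$ to $\Lambda$, and by comparison to the orbit $\langle g \rangle x$, has diameter bounded independently of $y$.

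For the WPD condition, I fix a point $s \in S$, a constant $r \geq 0$, and consider elements $h \in G$ with $d(s,hs) \leq r$ and $d(g^m s, hg^m s) \leq r$ for $m$ to be chosen large. For every intermediate checkpoint $S_i$ with $0 < i < m$, the \"uber-contraction condition applied to the pairs $(s,g^ms)$ and $(hs,hg^m s)$ (whose projections onto $\Lambda$ differ by a bounded amount) forces both geodesics to pass through $S_i$; projecting $hS_i$ back onto $\Lambda$ and combining with the endpoint bounds shows that $hS_i$ lies within a bounded neighbourhood of some checkpoint $S_{\sigma(i)}$. Because each $S_i$ is a translate of the \emph{finite} set $S$, the possible matchings between $hS_i$ and $S_{\sigma(i)}$ form a finite set; a pigeonhole / finite-index argument (together with the fact that the translation length of $g$ is essentially determined by $f$) then allows me to extract a finite-index subset of the candidate elements $h$ that fix $s$ and $g^m s$ pointwise. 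Choosing $m \geq m_0$, hypothesis $(ii)$ says this subset is finite, giving the WPD bound.

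The main obstacle I expect is the WPD step, specifically the transfer from \textit{approximate} stabilization of the two endpoints to \textit{genuine} pointwise fixing of a pair of checkpoints $s$ and $g^m s$. This step is where the structure of the \"uber-contraction truly pays off: the combination of the forced geodesic-through-checkpoint property and the finiteness of each individual checkpoint $S_i$ turns a coarse statement (moving points by at most $r$) into a combinatorial one (permuting finite sets), which in turn reduces to the much weaker pointwise-stabilizer hypothesis $(ii)$. Once this reduction is carried out, the theorem follows from the Bestvina--Bromberg--Fujiwara criterion.
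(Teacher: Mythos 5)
This statement is not proved in the paper at all: it is quoted verbatim from \cite{MartinTame} (Theorem 1.2 there) and closed with a \verb|\qed|, so there is no in-paper argument to compare yours against. Judged on its own, your strategy --- feed $g$ into the Bestvina--Bromberg--Fujiwara criterion by verifying strong contraction and WPD --- is the right frame, but both halves of your execution have genuine gaps.

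For strong contraction, the bound you obtain is $d(p',q')\leq 6r$ for $p,q$ in a ball of radius $r$ disjoint from $\Lambda$. That is a bound \emph{depending on the radius of the ball}, whereas strong contraction demands a single constant valid for all such balls simultaneously; a radius-linear bound of this kind is satisfied by an ordinary geodesic line in $\bbR^2$, which is not contracting. The disjointness of the ball from $\Lambda$ has to be used differently: it suffices to show that $d(p,q)\leq d(p,\Lambda)$ forces $d(p',q')\leq C$ (this recovers the ball statement by projecting every point of $B(y,r)$ against the centre $y$, since $d(y,p)\leq r\leq d(y,\Lambda)$). With that formulation your checkpoint argument does close up: if a checkpoint $S_i$ at distance at least $L$ from both projections forces the geodesic $[p,q]$ through a point $z\in\Lambda$, then $d(p,q)=d(p,z)+d(z,q)\geq d(p,\Lambda)+d(q,\Lambda)$, which together with $d(p,q)\leq d(p,\Lambda)$ forces $q=z\in S_i$, contradicting $d(S_i,q')\geq L>0$. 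As written, your version does not establish the hypothesis of the BBF theorem.

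The more serious gap is in the WPD step, which is the actual content of the theorem: the whole point of replacing WPD by condition $(ii)$ is that one must convert the \emph{coarse} condition ``$d(s,hs)\leq r$ and $d(g^ms,hg^ms)\leq r$'' into finiteness using only the finiteness of \emph{pointwise} stabilisers of pairs $(s',g^{m'}s')$ with $s'\in S$. Your argument gets as far as ``$h\gamma_0$ must cross each intermediate checkpoint $S_i$, and each $S_i$ is finite,'' which is correct and is indeed the mechanism one should exploit; but the concluding sentence --- that a ``pigeonhole / finite-index argument'' extracts ``a finite-index subset of the candidate elements $h$ that fix $s$ and $g^ms$ pointwise'' --- is not an argument. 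The set of candidate elements is not a subgroup, so ``finite index'' is undefined; and it is simply not true that elements coarsely stabilising the pair must fix $s$ and $g^ms$: what one must show is that the candidate set is covered by finitely many translates of pointwise stabilisers of pairs of checkpoint points, which requires pinning down, for each candidate $h$, actual incidences of the form $h(\text{point of }S_j)=\text{point of }S_{j'}$ rather than mere proximity. That reduction is precisely the technical heart of Theorem 1.2 of \cite{MartinTame}, and it is missing here.
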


Note that condition $(ii)$ of the previous theorem is a considerable weakening of the  WPD condition for $g$. 
 
This weaker condition has the advantage of involving only stabilisers of pairs of points, which makes it much easier to use. 

\section{\"Uber-separated hyperplanes and the proof of Theorem \ref{Main}}

We now recall a few basic facts concerning CAT(0) cube complexes, and more precisely the notions of bridges and \"uber-separated pairs, which are crucial to prove Theorems \ref{Main} and \ref{MainVert}. The missing details and proofs can be found in \cite{CFI}. 

By a slight abuse of notation, we will identify a CAT(0) cube complex with its vertex set endowed with the graph metric coming from its $1$-skeleton. Each hyperplane separates the vertex set into two disjoint components, which we refer to as halfspaces. 

The following notion was introduced by Behrstock--Charney in \cite{Behrstock_Charney}:
\begin{defi}\label{defi:strongly separated} 
Two parallel hyperplanes of a CAT(0) cube complex are said to be {\em strongly separated} if 
 no hyperplane is transverse to both. By the usual abuse of terminology, we say that two halfspaces are strongly separated if the corresponding hyperplanes are. 
\end{defi}

We will need tools introduced by Caprace--Sageev \cite{CapraceSageevRankRigidity}. Recall that a family of $n$ pairwise crossing hyperplanes divides a CAT(0) cube complex into $2^n$ regions called \textit{sectors}.

\begin{lemma}[Caprace--Sageev {\cite[Proposition 5.1, Double-Skewering Lemma, Lemma 5.2]{CapraceSageevRankRigidity}}]\label{CSstuff}
Let $X$ be a finite dimensional irreducible CAT(0) cube complex  and let $ G \ra $ Aut$(X)$ be a group acting essentially and without fixed points in $X \cup \partial_\infty X$. 
\begin{itemize}
\item (Strong Separation Lemma) Let $h_1$ be a halfspace of $X$. Then   there exists a halfspace $h_2$ such that $h_1 \subset h_2$ and  such that $h_1$ and $h_2$ are strongly separated. 
\item (Double-Skewering Lemma) Let $h_1 \subset h_2$ be two nested halfspaces. Then there exists an element $g \in G$ that \textit{double-skewers} $h_1$ and $h_2$, that is, such that $h_1 \subset h_2 \subset gh_1$.
\item (Sector Lemma) Let $\hat h_1$, $\hat h_2$ be two transverse hyperplanes. Then 
we can choose two disjoint hyperplanes $\hat{h}_3$ and $\hat{h}_4$ that are contained in opposite sectors determined by $\hat{h}_1$ and $\hat{h}_2$.\qed
\end{itemize}
\end{lemma}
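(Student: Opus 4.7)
The three assertions are classical facts from the foundational work of Caprace--Sageev, and my plan is to sketch how I would reproduce their arguments. The central ingredient is the \emph{Flipping Lemma}: for every halfspace $h$ of $X$ there exists $g \in G$ with $gh \subsetneq h^*$. I would establish this first by a ping-pong-style argument: if no $g$ flips $h$, every $g \in G$ either stabilises $\hat h$ or sends $h$ strictly into itself, and combining essentiality (which forces the existence of elements pushing orbits arbitrarily deep into $h$) with non-elementarity produces a pair of elements whose joint dynamics yields a fixed point in $\partial_\infty X$, a contradiction. Given Flipping, one obtains \emph{skewering} by composing two flipping elements: for every halfspace $h$ there exists $g \in G$ with $gh \subsetneq h$.

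For the Double-Skewering Lemma, given $h_1 \subsetneq h_2$, I would apply skewering to obtain $g \in G$ with $gh_1 \subsetneq h_1$; then $(g^{-n} h_1)_{n \geq 0}$ is a strictly nested increasing chain of halfspaces, and since the axis of $g^{-1}$ has positive translation length pointing away from $h_1$, for $n$ large enough $g^{-n}h_1$ contains $h_2$, giving the required element. The Strong Separation Lemma is the most delicate of the three. Starting from $h_1$, I would produce an increasing chain $h_1 \subsetneq h^{(1)} \subsetneq h^{(2)} \subsetneq \cdots$ by iterated skewering, and argue by contradiction: if no $h^{(n)}$ were strongly separated from $h_1$, each would carry a hyperplane transverse to $\hat h_1$, and I would use finite-dimensionality together with irreducibility (which rules out $X$ splitting as a non-trivial product) to extract a configuration of hyperplanes fixing a point of $\partial_\infty X$, contradicting non-elementarity.

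Finally, for the Sector Lemma, given transverse hyperplanes $\hat h_1, \hat h_2$, I would apply Strong Separation to $h_1$ and to $h_1^*$ to obtain halfspaces $h_3 \supsetneq h_1$ and $h_4 \supsetneq h_1^*$ that are respectively strongly separated from $h_1$ and $h_1^*$. Neither $\hat h_3$ nor $\hat h_4$ can be transverse to $\hat h_2$, because $\hat h_2$ is already transverse to $\hat h_1$ and strong separation forbids it from also crossing $\hat h_3$ or $\hat h_4$, hence each lies entirely on one side of $\hat h_2$; a short case analysis on the resulting sectors---swapping $h_2$ with $h_2^*$ if necessary---delivers the disjoint pair in opposite sectors. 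The main obstacle I anticipate is the Strong Separation step: juggling essentiality, irreducibility and the pocset combinatorics to preclude persistently crossing hyperplanes is the combinatorial heart of the argument, while the other three parts follow more directly once Flipping is in hand.
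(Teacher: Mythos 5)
The paper offers no proof of this lemma: it is quoted as a black box from Caprace--Sageev, so the only meaningful comparison is with their original arguments. Your overall architecture (Flipping $\Rightarrow$ Double Skewering; Strong Separation from irreducibility and finite dimension; Sector Lemma from Strong Separation) does match theirs, but several of the individual deductions you propose would fail. First, in your reduction for the Flipping Lemma the dichotomy ``every $g$ either stabilises $\hat{h}$ or sends $h$ strictly into itself'' is not exhaustive: $gh$ may be transverse to $h$, may strictly contain $h$, or may contain $h^*$, and the real proof has to deal with all of these configurations. Second, your derivation of Double Skewering is incorrect: if $g$ merely skewers $\hat{h}_1$, there is no reason for $g^{-n}h_1$ ever to contain $h_2$. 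Already in a tree the axis of $g$ can enter $h_1^*$ along a branch disjoint from $h_2^*$, in which case $g^{-n}h_1^*$ and $h_2^*$ are eventually disjoint nonempty sets and $g^{-n}h_1 \supseteq h_2$ never holds. The correct argument applies the Flipping Lemma twice, once to $h_1$ and once to $h_2^*$, and composes the two resulting elements; it does not iterate a single skewering element.

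Third, your Sector Lemma argument controls on which side of $\hat{h}_1$ the hyperplanes $\hat{h}_3$ and $\hat{h}_4$ lie (one in $h_1$, one in $h_1^*$), and strong separation does force each to lie entirely on one side of $\hat{h}_2$, but you have no control over \emph{which} side: both may land in $h_2$, giving hyperplanes in adjacent rather than opposite sectors, and relabelling $h_2 \leftrightarrow h_2^*$ cannot repair this since it moves both hyperplanes to the other side simultaneously. What Caprace--Sageev actually prove (their Lemma 5.2) is the stronger statement that \emph{every} sector determined by a pair of crossing hyperplanes contains a hyperplane --- this is also the form used later in the paper's Strong Sector Lemma --- and the version quoted here follows by picking hyperplanes in two opposite sectors, which are then automatically disjoint. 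Finally, the Strong Separation step, which you rightly identify as the combinatorial heart of the matter, is left entirely as a gesture (``extract a configuration of hyperplanes fixing a point of $\partial_\infty X$'') and would need the genuine pocset argument of Caprace--Sageev's Proposition 5.1; as written, none of the three statements is actually established.
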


 We will need a finer notion of strong separation of halfspaces, which is less standard but will be key to our work.

\begin{defi}\label{sss} 
Two strongly separated halfspaces $h_1$ and $h_2$ are said to be an {\em \"uber-separated pair} 
if for  any two halfspaces $k_1, k_2$ with the property that $h_i $ and $ k_i$ are transverse for $i =1, 2$, then  $k_1$ and $ k_2$
are parallel.
We say that two strongly separated hyperplanes are
{\em \"uber-separated} if their halfspaces are. 
 \end{defi}

Note that pairs of \"uber-separated hyperplanes correspond exactly to pairs of hyperplanes at distance at least $4$ in the intersection graph. 

\begin{rem}\label{here} If $h\subset k\subset \ell$ are pairwise strongly separated halfspaces, then
$h$ and $\ell$ are \"uber-separated.\end{rem}

Notice that \"uber-separated pairs are in particular strongly separated and hence they do not exist in the reducible case by
\cite[Proposition~5.1]{CapraceSageevRankRigidity}. The existence of \"uber-separated hyperplanes is a consequence of the Double-Skewering Lemma \ref{CSstuff}.
We will need the following lemma, which is a direct consequence of the  Double-Skewering Lemma \ref{CSstuff} and \cite[Lemma 2.14]{CFI}:

\begin{lemma}\label{lem:ss}Let $X$ be a finite dimensional irreducible CAT(0) cube complex 
and $G\to{\rm Aut}(X)$ a group acting essentially and non-elementarily.  
Given any two parallel hyperplanes $\hat{h}_1$ and $\hat{h}_2$, there exists $g\in G$ that double-skewers $\hat{h}_1$ and $\hat{h}_2$ and such that 
$\hat{h}_1$ and $g\hat{h}_1$ are \"uber-separated. \qed
\end{lemma}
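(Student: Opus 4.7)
The plan is to iterate the Strong Separation Lemma (Lemma~\ref{CSstuff}) to build a nested chain of strongly separated halfspaces stacked above $\hat{h}_1$, and then to invoke the Double-Skewering Lemma on the extremal elements of this chain to produce the desired element $g$. Since $\hat{h}_1$ and $\hat{h}_2$ are parallel and distinct, one of the four quadrants they cut out is empty, so after replacing a halfspace by its complement if necessary I may assume that the halfspaces are oriented so that $h_1 \subsetneq h_2$.

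I would then apply the Strong Separation Lemma twice in succession: first to $h_2$, producing a halfspace $m$ with $h_2 \subsetneq m$ such that $\hat{h}_2$ and $\hat{m}$ are strongly separated; and then to $m$, producing a halfspace $m'$ with $m \subsetneq m'$ such that $\hat{m}$ and $\hat{m}'$ are strongly separated. Applying the Double-Skewering Lemma to the nested pair $h_1 \subsetneq m'$ yields $g \in G$ with $h_1 \subsetneq m' \subsetneq gh_1$. In particular $h_1 \subsetneq h_2 \subsetneq gh_1$, which shows that $g$ double-skewers $\hat{h}_1$ and $\hat{h}_2$ and settles the first assertion.

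It then remains to verify that $\hat{h}_1$ and $g\hat{h}_1$ are \"uber-separated. I would first record the elementary \emph{propagation principle}: given nested halfspaces $a \subsetneq b \subsetneq c$, if either $\hat{a},\hat{b}$ or $\hat{b},\hat{c}$ is strongly separated, then so is $\hat{a},\hat{c}$. Indeed, any hyperplane transverse to both $\hat{a}$ and $\hat{c}$ necessarily meets $a \subseteq b$ and meets $c^{\ast} \subseteq b^{\ast}$, hence is transverse to $\hat{b}$, contradicting the hypothesis. Applying this principle to the chain $h_1 \subsetneq h_2 \subsetneq m \subsetneq m' \subsetneq gh_1$ yields successively that $\hat{h}_1$ and $\hat{m}$ are strongly separated (from the pair $\hat{h}_2,\hat{m}$), then that $\hat{m}$ and $g\hat{h}_1$ are strongly separated (from the pair $\hat{m},\hat{m}'$), and finally that $\hat{h}_1$ and $g\hat{h}_1$ are strongly separated (from the pair $\hat{h}_1,\hat{m}$). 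The three halfspaces $h_1 \subsetneq m \subsetneq gh_1$ are therefore pairwise strongly separated, and Remark~\ref{here} immediately delivers the \"uber-separation of $\hat{h}_1$ and $g\hat{h}_1$.

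The main delicate point in the argument is the bookkeeping in the propagation principle, ensuring that each successive strong separation really does follow from the previous one; once this is in place the construction is essentially mechanical, and I do not expect to need the full strength of \cite[Lemma~2.14]{CFI} beyond what is already encoded in Remark~\ref{here}.
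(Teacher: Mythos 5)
Your argument is correct and is precisely the proof the paper leaves implicit: the paper states the lemma with only a citation of the Double-Skewering Lemma and \cite[Lemma 2.14]{CFI} (the latter being what Remark \ref{here} encodes), and you have fleshed out exactly that route --- iterate the Strong Separation Lemma to stack strongly separated halfspaces above $h_2$, double-skewer the outermost pair, and propagate strong separation along the nested chain to land in the hypotheses of Remark \ref{here}. Your ordering of the steps (building $m'$ \emph{before} double-skewering, so that the middle term $m$ of the pairwise strongly separated triple is genuinely sandwiched between $h_1$ and $gh_1$) is the right bookkeeping, and the propagation principle you state is the standard fact needed here.
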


For two points $x, y$ of $X$, we denote by $\mathcal{I}(x,y)$ the interval between $x$ and $y$, that is, the union of all the geodesics between $x$ and $y$. Recall that intervals are finite, as they only depend on the (finite) set of hyperplanes separating the given pair of points.

\begin{defi}
Let $h_1\subset h_2$ be a nested pair of halfspaces. 
The \textit{(combinatorial) bridge} between $\hat h_1$ and $\hat h_2$, denoted $b(\hat h_1,\hat h_2)$, is the union of all the geodesics between points $x_1 \in  h_1$ and $x_2 \in h_2^*$ minimizing the distance between $h_1$ and $h_2^*$.
\end{defi}

\begin{lemma}[Lemma 2.18 and 2.24 {\cite{CFI}}] \label{uber} If $h_1\subset h_2$ be a pair of nested halfspaces, strongly separated, there exists a unique pair of points of $h_1\times h_2^*$, called the gates of the bridge, that minimizes the distance between $h_1$ and $h_2^*$, i.e. there exist points $x_1\in h_1$ and $x_2\in h_2^*$ such that
$$b(\hat h_1,\hat h_2)=\mathcal{I}(x_1,x_2).$$ 
In particular, the bridge between two  strongly separated hyperplanes is finite. 
Moreover, the following holds true for any $y_1\in h_1$ and $y_2\in h_2^*$.

$$d(y_1,y_2)=d(y_1,x_1)+d(x_1,x_2)+d(x_2,y_2).  $$\qed
\end{lemma}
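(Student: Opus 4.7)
The plan is to reduce everything to a single geometric fact about gates: the combinatorial gate projection $g_{h_1}\colon X \to h_1$ is constant on $h_2^*$, and symmetrically $g_{h_2^*}\colon X \to h_2^*$ is constant on $h_1$. Once this is established, the distance formula, the identification of the bridge with an interval, and the finiteness of the bridge will all follow by routine applications of the gate identity $d(y, z) = d(y, g_C(y)) + d(g_C(y), z)$ for $z$ in a convex subcomplex $C$, applied to the convex subcomplexes $h_1$ and $h_2^*$.

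First I would recall the standard combinatorial description of the gate: for $y \in h_1^*$, the vertex $x = g_{h_1}(y)$ is characterised as the unique point of $h_1$ such that $x$ and $y$ lie on the same side of every hyperplane $\hat m \neq \hat h_1$ that either crosses $\hat h_1$ or sits strictly inside $h_1$ --- since any such $\hat m$ has vertices of $h_1$ on both sides, otherwise one could move $x$ across $\hat m$ within $h_1$ to strictly decrease $d(x,y)$.

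Next I would prove the key constancy statement by contradiction. Suppose $y, y' \in h_2^*$ give distinct gates $x := g_{h_1}(y) \neq x' := g_{h_1}(y')$, and choose any hyperplane $\hat k$ separating $x$ from $x'$. Since $x, x' \in h_1$ we have $\hat k \neq \hat h_1$, so $\hat k$ either crosses $\hat h_1$ or is parallel to $\hat h_1$ and strictly inside $h_1$. In the transverse case, strong separation forces $\hat k$ not to cross $\hat h_2$; moreover, a square realising the crossing of $\hat k$ and $\hat h_1$ has all four vertices in $h_2$ (the two in $h_1$ lie in $h_2$ directly; the other two are joined to them by edges dual to $\hat h_1$ rather than $\hat h_2$, hence stay on the same side of $\hat h_2$), forcing $\hat k$ to sit on the $h_2$ side of $\hat h_2$. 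Therefore $h_2^*$ lies entirely in a single halfspace of $\hat k$. In the parallel case, $\hat k \subset h_1 \subset h_2$ gives the same conclusion directly. In either scenario, $y$ and $y'$ lie on a common side of $\hat k$; the gate description above then forces $x$ and $x'$ onto that same side, contradicting the choice of $\hat k$. Hence $g_{h_1}|_{h_2^*}$ is constant, with common value $x_1$, and the symmetric argument produces $x_2$.

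Finally, for any $y_1 \in h_1$ and $y_2 \in h_2^*$, two applications of the gate identity give
\[ d(y_1, y_2) = d(y_1, x_2) + d(x_2, y_2) = d(y_1, x_1) + d(x_1, x_2) + d(x_2, y_2), \]
which is precisely the distance formula of the lemma. Strict additivity of this decomposition shows that $(x_1, x_2)$ is the unique minimising pair in $h_1 \times h_2^*$, so $b(\hat h_1, \hat h_2) = \mathcal{I}(x_1, x_2)$, which is finite by finiteness of intervals in CAT(0) cube complexes. The hard part will be the case analysis in the constancy step: all of the content of the lemma is concentrated there, with strong separation being exactly the ingredient that prevents a hyperplane transverse to $\hat h_1$ from separating points of $h_2^*$ --- without it, gates can genuinely split and the lemma fails.
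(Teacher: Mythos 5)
The paper does not actually prove this lemma: it is imported verbatim from \cite{CFI} (Lemmas 2.18 and 2.24) with a \texttt{\textbackslash qed}, so there is no in-paper argument to compare against. Judged on its own, your proof is correct. The heart of it --- that the combinatorial gate projection onto $h_1$ is constant on $h_2^*$ --- is exactly the right reduction, and your case analysis is sound: a hyperplane $\hat k$ separating two gates must separate two vertices of $h_1$, hence is either transverse to $\hat h_1$ (in which case strong separation plus your square argument puts all of $h_2^*$ on one side of $\hat k$) or has a halfspace contained in $h_1\subset h_2$ (same conclusion); either way $\hat k$ cannot separate points of $h_2^*$, and the gate characterisation then forbids it from separating the gates. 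The two applications of the gate identity, the strict additivity giving uniqueness of the minimising pair, the identification $b(\hat h_1,\hat h_2)=\mathcal I(x_1,x_2)$, and finiteness via finiteness of intervals all follow as you say. The only points where a referee would ask you to expand are the two facts you invoke as ``standard'': the characterisation of the gate in terms of hyperplanes crossing the convex subcomplex, and the gate identity $d(y,z)=d(y,g_C(y))+d(g_C(y),z)$ for $z\in C$; both are genuinely standard for convex subcomplexes of CAT(0) cube complexes, so this is a matter of citation rather than a gap. Your route via constancy of gates is a clean, self-contained alternative to the median-based bookkeeping in \cite{CFI}, and it makes transparent exactly where strong separation enters.
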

 
 The following is a very important feature of \"uber-separated pairs.
 
\begin{lemma}[Proof of Lemma 3.5 of \cite{CFI}]\label{key} Let $h_1\subset h_2$ be an \"uber-separated pair of halfspaces, $x\in h_1$ and $y\in h_2^*$. Then every geodesic between $x$ and $y$ meets the bridge  $b(\hat{h}_1,\hat{h}_2)$. \qed
\end{lemma}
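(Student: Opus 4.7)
The plan is to exploit the distance decomposition of Lemma~\ref{uber} to partition the hyperplanes crossed by any geodesic $\gamma$ from $x$ to $y$ into three sets, and then to use \"uber-separation to force a rigid crossing order between two of these sets. Let $x_1 \in h_1, x_2 \in h_2^*$ be the gates given by Lemma~\ref{uber}. Since $d(x,y) = d(x,x_1) + d(x_1,x_2) + d(x_2,y)$, each hyperplane separating $x$ from $y$ falls into exactly one of three disjoint sets: $A_1$ (separating $x$ from $x_1$), $A_2$ (separating $x_1$ from $x_2$, i.e.\ the hyperplanes crossed by the bridge), and $A_3$ (separating $x_2$ from $y$). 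The geodesic $\gamma$ crosses each of these hyperplanes exactly once.

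The heart of the argument is the claim that every $\hat k_1 \in A_1$ and every $\hat k_3 \in A_3$ are parallel, with $\hat k_1$ lying on the $x$-side of $\hat k_3$. For the parallelism I will run a short case analysis: since $\hat k_1$ separates two points of $h_1$, it is either transverse to $\hat h_1$ or contained in $h_1$; similarly, $\hat k_3$ is either transverse to $\hat h_2$ or contained in $h_2^*$. In the three cases where at least one of $\hat k_1, \hat k_3$ is not transverse to the corresponding $\hat h_i$, strong separation forces $\hat k_1$ and $\hat k_3$ to lie in opposite halfspaces of $\hat h_1$ or $\hat h_2$, hence to be parallel. The remaining ``doubly transverse'' case is precisely the configuration ruled out by the definition of \"uber-separation, which directly yields parallelism. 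The nesting $\hat k_1 \subset k_3$, where $k_3$ denotes the halfspace of $\hat k_3$ containing $x$, then follows from the fact that $x_1 \in k_1 \cap k_3$ and $x \in k_1^* \cap k_3$ together forbid $\hat k_1 \subset k_3^*$.

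Granted the claim, $\gamma$ must cross every $\hat k \in A_1$ before any $\hat k \in A_3$, so there is a well-defined vertex $z$ of $\gamma$ reached just after the last $A_1$-crossing and before any $A_3$-crossing. For every $\hat k \in A_1$, the vertex $z$ lies on the $x_1$-side of $\hat k$; for every $\hat k \in A_3$, it lies on the $x$-side, which coincides with the $x_1$-side since $\hat k \notin A_1$; and for every hyperplane not separating $x$ from $y$, the vertex $z$ trivially agrees with $x, x_1, x_2$. Since membership in $\mathcal{I}(x_1,x_2)$ is characterised by agreement with both $x_1$ and $x_2$ on every hyperplane that does not separate them, I conclude $z \in b(\hat h_1, \hat h_2)$. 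The main obstacle is the doubly transverse case of the parallelism claim: this is precisely where \"uber-separation, rather than mere strong separation, becomes indispensable, and it justifies the introduction of the stronger notion.
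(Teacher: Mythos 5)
Your argument is correct. Note that the paper does not actually prove this lemma --- it is quoted from the proof of Lemma 3.5 in \cite{CFI} --- and your write-up is essentially the argument from that reference: the gate decomposition $d(x,y)=d(x,x_1)+d(x_1,x_2)+d(x_2,y)$ of Lemma \ref{uber} partitions the separating hyperplanes into $A_1\sqcup A_2\sqcup A_3$, strong separation handles the cases where one of $\hat k_1,\hat k_3$ is nested in a halfspace, and \"uber-separation is exactly what kills the remaining doubly transverse configuration, forcing every $A_1$-wall to be crossed before every $A_3$-wall and placing the intermediate vertex $z$ in $\mathcal I(x_1,x_2)$. All the steps (the case analysis, the nesting $k_1^*\subset k_3$, and the wall-based characterisation of the interval) check out.
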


The following lemma explains the relationship between \"uber-separated pairs and \"uber-contractions.

\begin{lemma}\label{gates}Let  $h_1\subset h_2$ be an \"uber-separated pair of halfspaces, and $g\in G$ an element that double-skewers  $h_1$ and $h_2$. Then $g$ is an \"uber-contraction.\end{lemma}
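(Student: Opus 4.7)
The plan is to take as checkpoint the bridge $S := b(\hat h_1, \hat h_2)$, which is finite by Lemma \ref{uber}. By equivariance each translate $S_i := g^i S$ coincides with the bridge $b(g^i\hat h_1, g^i\hat h_2)$ of the \"uber-separated pair $(g^i h_1, g^i h_2)$. I first verify that $\Lambda := \bigcup_{i \in \bbZ} S_i$ is a quasi-line and produce the quasi-isometry $f \colon \Lambda \to \bbR$. Since $g$ double-skewers $h_1 \subset h_2$, it is a hyperbolic isometry with positive translation length $\tau$ and a combinatorial (quasi-)axis at bounded distance from $S$; in particular its orbits are quasi-isometrically embedded, and the map sending $g^i s$ to $i\tau$ (up to a bounded offset depending on $s \in S$) provides the desired quasi-isometry.

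Next I check the main property of a system of checkpoints: for $x, y \in X$ with closest-point projections $x', y' \in \Lambda$ such that $S_i$ coarsely separates $x'$ from $y'$ (say $f(x') < f(S_i) < f(y')$) and lies at distance at least $L$ from each, every geodesic from $x$ to $y$ meets $S_i$. Applying Lemma \ref{key} to the \"uber-separated pair $(g^i h_1, g^i h_2)$ reduces this to showing that $x \in g^i h_1$ and $y \in g^i h_2^*$; by symmetry, I concentrate on the first.

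The core of the argument is to choose $L$ large enough to force $x \in g^i h_1$. Two observations are needed. First, for every $j \leq i - 2$ one has $S_j \subset g^i h_1$: the left gate $g^j x_1$ lies in $g^j h_1 \subset g^i h_1$, and the right gate $g^j x_2 \in g^j h_2^*$ also lies in $g^i h_1$, because the hyperplanes $g^j \hat h_2$ and $g^i \hat h_1$ are parallel and distinct (the nesting $g^j h_2 \subsetneq g^i h_1$ is strict for $j \leq i-2$), so the single edge from $g^j x_2$ to its neighbor in $g^j h_2$ crosses only $g^j \hat h_2$ and not $g^i \hat h_1$; convexity of the halfspace then places the whole interval $S_j = \mathcal{I}(g^j x_1, g^j x_2)$ in $g^i h_1$. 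Taking $L$ larger than a suitable constant (depending on $\tau$, on the diameter of $S$, and on the constants of $f$) forces $x' \in S_j$ for some $j \leq i - 2$, so $x' \in g^i h_1$. Second, to pass from $x'$ to $x$, I compare with the closest-point projection $\pi_\alpha(x)$ onto the combinatorial $g$-axis $\alpha$: since $\Lambda$ sits at bounded Hausdorff distance from $\alpha$, the point $\pi_\alpha(x)$ lies within a bounded distance of $x'$, hence also in $g^i h_1$ provided $L$ is large enough; the classical halfspace-preservation of gate projections onto the convex subcomplex $\alpha$, applied to the transverse hyperplane $g^i\hat h_1$, then yields $x \in g^i h_1$.

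The principal technical hurdle will be controlling the closest-point projection onto the non-convex set $\Lambda$; the plan is to sidestep this issue by comparing with the projection onto the genuine (convex) $g$-axis, where the halfspace-preservation of gates in CAT(0) cube complexes is standard. A secondary subtlety is the degenerate configuration $\hat h_2 = g\hat h_1$, in which consecutive bridges are adjacent across a single hyperplane and the inclusion $S_{i-1} \subset g^i h_1$ can fail; this is absorbed by relying only on the stronger condition $j \leq i - 2$ in the estimate above and choosing $L$ accordingly.
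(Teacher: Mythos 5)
Your overall architecture is the same as the paper's: take the $g$-translates of a (finite, by Lemma \ref{uber}) bridge as checkpoints, get the quasi-isometry to $\bbR$ from Haglund's combinatorial axis, and invoke Lemma \ref{key} to force geodesics through a checkpoint once the endpoints are known to lie in the correct halfspaces $g^ih_1$ and $(g^ih_2)^*$. Your analysis of which bridges $S_j$ lie in $g^ih_1$ (including the care taken with the degenerate case $\hat h_2=g\hat h_1$ and the restriction to $j\le i-2$) is correct. The gap is in the one step that actually matters: passing from ``$x'\in g^ih_1$'' to ``$x\in g^ih_1$''. Your mechanism for this is (a) the claim that $\pi_\alpha(x)$ is uniformly close to $x'$ because $\Lambda$ and the axis $\alpha$ are at bounded Hausdorff distance, and (b) halfspace-preservation of the gate projection onto $\alpha$. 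Step (a) is not a valid inference: if $d_H(\Lambda,\alpha)\le D$ and $d(x,\Lambda)=d$, the triangle inequality only gives $d(x',\pi_\alpha(x))\le 2d+D$, which is unbounded in $d$; uniform closeness of nearest-point projections onto two Hausdorff-close sets is essentially equivalent to those sets being contracting, which is what the lemma is trying to establish, so the argument is circular. Step (b) is also shaky as stated: a combinatorial geodesic axis in a CAT(0) cube complex need not be a convex subcomplex (think of a staircase geodesic in $\bbZ^2$), and the property ``a hyperplane crossing $\alpha$ does not separate $x$ from its projection'' is a property of gate projections onto \emph{convex} subcomplexes.

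The repair does not need $\alpha$ at all and is exactly what the paper does: control the position of $x$ itself directly from the fact that $x'$ is a \emph{closest} point of $\Lambda$ to $x$, using Lemma \ref{key} a second time. Concretely, suppose $x'\in S_j$ with $j\le i-2$ but $x\notin g^{j+2}h_1\supseteq\cdots$; say $x\in (g^{m}h_2)^*$ for some $m\ge j+1$ well separated from $j$. Since $x'\in S_j\subseteq g^{m}h_1$, Lemma \ref{key} applied to the \"uber-separated pair $(g^{m}h_1,g^{m}h_2)$ forces every geodesic from $x$ to $x'$ to pass through the intermediate checkpoint $S_{m}\subset\Lambda$ at a point $w$ with $d(x,w)<d(x,x')$ unless $w=x'$, which is excluded once $m$ and $j$ are far enough apart for $S_m$ and $S_j$ to be disjoint. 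Hence $x$ lies within a uniformly bounded ``window'' of slabs around $S_j$, and in particular $x\in g^ih_1$ once $L$ is large enough. With that substitution your proof goes through; as written, the projection-comparison step is a genuine gap.
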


\begin{proof}
Let $B$ denote the bridge between $h_1$ and $gh_1$.  We will show that the collection of the $g$-translates of $B$ forms a system of checkpoints. 
Notice that since the bridge is an interval, it is finite even though the complex is not assumed to be locally finite, hence all the checkpoints $S_n:= g^nB$ are finite. Set $\Lambda:= \bigcup_{n \in \bbZ}S_n$. Let $Y:= h_1^* \cap gh_1$ and $Y_n:=g^n Y$ for every $n \in \bbZ$. 

For every point $x \in Y_n$, we have that its closest-point projections on $\Lambda$ are in $S_{n-1} \cup S_n \cup S_{n+1}$: Indeed, since $h_1$ and $gh_1$ are \"uber-separated, it follows from Lemma \ref{key} that a geodesic between $x$ and a point $x' \in Y_{n'}$ with $n'\geq n+2$ ($n' \leq n-2$ respectively) meets the bridge $S_{n+1}$ ($S_{n-1}$ respectively). Moreover, for every $x \in X$ which projects on $\Lambda$ to a point of $S_n$, we have that $x \in Y_{n-1} \cup Y_n \cup Y_{n+1}$ for the same reasons. Thus, for $x,y\in X$ which project on $\Lambda$ to points $x' \in S_n$ and $y'\in S_m$ respectively with $m \geq n+4$, it follows that $x$ and $y$ are separated by the hyperplanes $g^{n+2}\hh_1, \ldots, g^{m-1}\hh_1$, hence every  geodesic  between $x$ and $y$ meets each checkpoint $S_{n+2}, \ldots, S_{m-2}$ by Lemma \ref{key}.

By a result of Haglund \cite[Theorem 1.4]{HaglundQICyclicCubeComplexes}, $g$ admits a combinatorial axis $\Lambda_g$. As $\Lambda$ and $\Lambda_g$ stay at bounded distance from one another, it follows easily from the discussion of the previous paragraph that $\Lambda_g$ is contained in $\Lambda$. Since $B$ is finite, the closest-point projection $\Lambda \ra \Lambda_g$ yields a quasi-isometry $\Lambda \ra \bbR$, and it is now straightforward to check that $g$ is an \"uber-contraction.
\end{proof}

\begin{proof}[Proof of Theorem \ref{Main}]Let $G$ be a group acting essentially and non-elementarily on an irreducible finite-dimensional CAT(0) cube complex $X$ and assume that there exist two hyperplanes whose stabilisers intersect along a finite subgroup. By Proposition \ref{prop:General_Criterion_acylindrically_hyperbolic},  it is enough to construct an element of $G$ satisfying conditions  $(i)$ and $(ii)$ of Proposition \ref{prop:General_Criterion_acylindrically_hyperbolic}.

Let $\hat{h}_1$ and $\hat{h}_2$ be two hyperplanes whose stabilisers intersect along a finite subgroup. 
We start by showing that we can assume that $\hat{h}_1$ and $\hat{h}_2$ are disjoint. 
By the Sector Lemma \ref{CSstuff}, we choose two disjoint hyperplanes $\hat{h}_3$ and $\hat{h}_4$ that are contained in opposite sectors determined by $\hat{h}_1$ and $\hat{h}_2$. Up to applying the Strong Separation Lemma \ref{CSstuff}, we can further assume that $\hat{h}_3$ and $\hat{h}_4$ are strongly separated. Let $H:= \mbox{Stab}(\hat{h}_3) \cap \mbox{Stab}(\hat{h}_4)$. Then $H$ stabilises the bridge between $\hat{h}_3$ and $\hat{h}_4$, which is a single interval by Lemma \ref{uber}  since $\hat{h}_3$ and $\hat{h}_4$ are strongly separated. As intervals are finite, $H$ virtually fixes a geodesic between $\hat{h}_3$ and $\hat{h}_4$. As $\hat{h}_3$ and $\hat{h}_4$ are in opposite sectors determined by $\hat{h}_1$ and $\hat{h}_2$, such a geodesic crosses both  $\hat{h}_1$ and $\hat{h}_2$. Thus, $H$ is virtually contained in $\mbox{Stab}(\hat{h}_1) \cap \mbox{Stab}(\hat{h}_2)$, hence $H$ is finite, which is what we wanted.

Thus, let $\hat{h}_1$ and $\hat{h}_2$ be two disjoint hyperplanes whose stabilisers intersect along a finite subgroup. By Lemma \ref{lem:ss}, we choose an element $g$ of $G$ that double-skewers $\hat{h}_1$ and $\hat{h}_2$, and such that $\hat{h}_1$ and $g\hat{h}_1$ are \"uber-separated. Then according to Lemma \ref{gates} the element $g$ is an \"uber-contraction, proving $(i)$. Let $x$ be a point of the bridge between $\hat{h}_1$ and $\hat{h}_2$, and choose a geodesic $\gamma$ between $x$ and $g^2x$. We have that $\gamma$ crosses $g\hat{h}_1$ and $g\hat{h}_2$. Since intervals in a finite dimensional CAT(0) cube complex are finite, a subgroup fixing both $x$ and $g^2x$ virtually fixes $\gamma$ pointwise. It follows that  $\mbox{Stab}(x)\cap \mbox{Stab}(g^2x)$ is virtually contained in a conjugate of $\mbox{Stab}(\hat{h}_1) \cap \mbox{Stab}(\hat{h}_2)$, which is finite. This proves $(ii)$ , and Proposition \ref{prop:General_Criterion_acylindrically_hyperbolic} implies that $G$ is either virtually cyclic or acylindrically hyperbolic. The action being non-elementarily, the virtually cyclic case is automatically ruled out, which concludes the proof.
\end{proof}

\begin{proof}[Proof of Corollaries \ref{MainAcyl} and \ref{MainProper}]
Since Corollary \ref{MainProper} is a direct consequence of Corollary \ref{MainAcyl}, let us assume that the group $G$ acts essentially, non-elementarily, and non-uniformly weakly acylindrically (with a constant $L$ as in the statement) on the irreducible finite-dimensional CAT(0) cube complex $X$. By the Strong Separation Lemma \ref{CSstuff}, choose two disjoint hyperplanes $\hat h_1$ and $\hat h_2$. This implies that the combinatorial bridge between $\hat h_1$ and $\hat h_2$ is not reduced to a point. Moreover, the subgroup $\mbox{Stab}(\hat h_1)\cap \mbox{Stab}(\hat h_2)$ is virtually contained in the pointwise stabiliser of the finite bridge between $\hat h_1$ and $\hat h_2$ by Lemma \ref{uber}.
 
By the Double-Skewering Lemma \ref{CSstuff}, choose a group element $g$ that skewers both $\hat h_1$ and $\hat h_2$. For $n$ large enough, $\hat h_1$ and $g^n\hat h_1$ are \"uber-separated, and the distance between $\hat h_1$ and $g^n \hat h_1$ becomes greater than $L$ by Lemma \ref{uber}.  In particular, $\mbox{Stab}(\hat h_1)\cap \mbox{Stab}(g^n \hat h_1)$ virtually fixes a path of length $L$, hence $\mbox{Stab}(\hat h_1)\cap \mbox{Stab}(g^n \hat h_1)$ is finite by weak acylindricity. Corollary \ref{MainAcyl} now follows from Theorem \ref{Main}.
\end{proof}

Theorem \ref{Main} allows for a very simple geometric proof of the acylindrical hyperbolicity of certain groups: 

\begin{ex}\label{Higman}
The Higman group on $n \geq 4$ generators, defined by the following presentation: 
$$H_n:= \langle a_i, i \in \bbZ / n \bbZ ~|~ a_ia_{i+1}a_i^{-1}= a_{i+1}^{2}, i \in \bbZ / n \bbZ  \rangle,$$
was proved to be acylindrically hyperbolic by Minasyan--Osin \cite{MinasyanOsinTrees}, by means of its action on the Bass--Serre tree associated to some splitting. The Higman also acts cocompactly, essentially, and non-elementarily  on a CAT(0) square complex associated to its standard presentation. That CAT(0) square complex is irreducible, as links of vertices are  easily shown not to be complete bipartite graphs (see \cite[Corollary 4.6]{MartinHigmanCubical}). Since square stabilisers are trivial by construction, the stabilisers of two crossing hyperplanes intersect  along a finite (actually, trivial) subgroup. Thus, Theorem \ref{Main} applies.
\end{ex}

\section{Proof of Theorem \ref{MainVert}}\label{Applications}

\begin{defi}
Let $X$ be CAT(0) cube complex and $C, C'$ two cubes of $X$. We say that $C$ and $C'$ \textit{separate a pair of hyperplanes} if there exist two hyperplanes $\hh, \hh'$ of $X$ such that each hyperplane defined by an edge of $C \cup C'$ separates $\hh$ and $\hh'$.
\end{defi}

Theorem \ref{MainVert} will be a consequence of the following more general result: 

\begin{thm}\label{MainCubesSeparated}
Let $G$ be a group acting essentially and non-elementarily on a finite dimensional irreducible CAT(0) cube complex. Assume that there exist  two maximal cubes $C$, $C'$  of $X$ whose stabilisers intersect along a finite subgroup, and such that $C$ and $C'$ separate a pair of hyperplanes. Then $G$ is acylindrically hyperbolic.
\end{thm}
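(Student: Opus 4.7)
My plan is to mimic the proof of Theorem~\ref{Main}, producing an \"uber-contracting element $g \in G$ and verifying condition~$(ii)$ of Theorem~\ref{prop:General_Criterion_acylindrically_hyperbolic}, now using the finite intersection $\mbox{Stab}(C) \cap \mbox{Stab}(C')$ in place of a finite hyperplane-stabiliser intersection. The hyperplanes $\hat{h}, \hat{h}'$ from the hypothesis are necessarily disjoint and can be ordered as nested halfspaces $h \subset h'$. Applying the Strong Separation Lemma~\ref{CSstuff} to $h^c$ and to $h'$ yields halfspaces $h_1 \subset h \subset h' \subset h_2$ with $(\hat{h}_1, \hat{h})$ and $(\hat{h}', \hat{h}_2)$ strongly separated. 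The outer pair $\hat{h}_1, \hat{h}_2$ is also strongly separated: a hyperplane transverse to both would, by the intermediate strong separations, be disjoint from $\hat{h}$ and from $\hat{h}'$, and then being transverse to $\hat{h}_1 \subset h$ forces it into $h$, while being transverse to $\hat{h}_2 \subset h'^c$ forces it into $h'^c$, which is absurd since $h \cap h'^c = \emptyset$. Every hyperplane of $C \cup C'$ still separates $\hat{h}_1, \hat{h}_2$.

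By Lemma~\ref{lem:ss}, I can choose $g \in G$ double-skewering $\hat{h}_1, \hat{h}_2$ with $\hat{h}_1, g\hat{h}_1$ \"uber-separated; Lemma~\ref{gates} then gives that $g$ is an \"uber-contraction with checkpoints the translates of the bridge between $\hat{h}_1$ and $g\hat{h}_1$, establishing condition~$(i)$.

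For condition~$(ii)$, take a vertex $s$ in the checkpoint system and consider the finite interval $\mathcal{I}(s, g^m s)$ for $m$ large: any element of $\mbox{Stab}(s) \cap \mbox{Stab}(g^m s)$ virtually fixes this interval pointwise. The central claim is that, for some intermediate $j \in \{1, \ldots, m-1\}$, both $g^j C$ and $g^j C'$ lie in $\mathcal{I}(s, g^m s)$ as subcomplexes. Granting this, such an element is virtually in $\mbox{Stab}(g^j C) \cap \mbox{Stab}(g^j C') = g^j\bigl(\mbox{Stab}(C) \cap \mbox{Stab}(C')\bigr)g^{-j}$, which is finite by hypothesis. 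Using the nested chain $h_1 \subset h_2 \subset g h_1 \subset g h_2 \subset \cdots$ together with the position of $C, C'$ in the between region of $\hat{h}_1, \hat{h}_2$, one readily verifies that every hyperplane of $g^j C$ and of $g^j C'$ separates $s$ from $g^m s$, which takes care of the hyperplane inventory; what remains is locating a vertex of each cube in the interval and then propagating to the full cube.

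The main obstacle is this last step, showing that some vertex of $g^j C$ and some vertex of $g^j C'$ sit in $\mathcal{I}(s, g^m s)$. This is where the \"uber-separated geometry must be used: a median or gate-projection argument, together with the fact that every geodesic from $s$ to $g^m s$ must traverse each intermediate bridge by Lemma~\ref{key}, should locate such vertices, after which the inclusion of the whole cube follows from the hyperplane inventory above. Once $(ii)$ is established, Theorem~\ref{prop:General_Criterion_acylindrically_hyperbolic} yields that $G$ is virtually cyclic or acylindrically hyperbolic; non-elementarity excludes the former.
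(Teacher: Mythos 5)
Your proposal contains a genuine gap at precisely the step you flag as ``the main obstacle'': the claim that $g^jC$ and $g^jC'$ lie in the interval $\mathcal{I}(s,g^ms)$. Nothing in the hypotheses forces this, and it is false in general. A vertex $v$ belongs to $\mathcal{I}(s,g^ms)$ only if \emph{every} hyperplane separating $v$ from $s$ also separates $s$ from $g^ms$; but the hypothesis only controls the hyperplanes \emph{dual to the edges of} $C\cup C'$, not the hyperplanes separating the cubes from the axis of $g$. The cubes $C,C'$ may sit arbitrarily far from $\Lambda$ in a transverse direction (e.g.\ behind a hyperplane disjoint from the whole orbit $\langle g\rangle s$), in which case no vertex of any translate of them lies in any interval between points of $\Lambda$. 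So the median/gate argument you hope for cannot locate such vertices, and the plan of ``propagating to the full cube'' from a vertex inside the interval does not get off the ground.

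The repair --- and the paper's actual route --- is to notice that you never need the cubes inside the interval. An element virtually fixing pointwise a geodesic that crosses a hyperplane $\hat k$ fixes an edge dual to $\hat k$ and hence stabilises $\hat k$; so it suffices that the geodesic cross every hyperplane of $\cH_{C,C'}$ (your ``hyperplane inventory''), because \emph{maximality} of $C$ and $C'$ then gives $\bigcap_{\hat k\in\cH_{C,C'}}\mbox{Stab}(\hat k)\subseteq \mbox{Stab}(C)\cap\mbox{Stab}(C')$, which is finite. This maximality step is the key idea missing from your argument. Moreover, once you see this, the entire \"uber-contraction construction is unnecessary: the paper simply takes the pair $\hat h,\hat h'$ separated by all of $\cH_{C,C'}$, makes them strongly separated via the Strong Separation Lemma, observes that $\mbox{Stab}(\hat h)\cap\mbox{Stab}(\hat h')$ virtually fixes the (finite) geodesic between the gates of the bridge $b(\hat h,\hat h')$ --- which crosses every hyperplane of $\cH_{C,C'}$ --- and concludes that this intersection is finite; Theorem \ref{Main} then applies directly. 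Your first two paragraphs essentially re-derive what Theorem \ref{Main} already provides, while the third, where the new content should be, is the part that fails.
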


\begin{proof}
Let $\cH_{C, C'}$ be the set of hyperplanes defined by the edges of $C \cup C'$. Since $C$ and $C'$ are maximal cubes, we have  that 
 $ \bigcap_{\hat k \in \cH_{C,C'}} \mbox{Stab}(\hat k)$   is contained in $\mbox{Stab}(C)\cap \mbox{Stab}(C')$, 
and it follows that $\bigcap_{\hat k \in \cH_{C,C'}} \mbox{Stab}(\hat k)$ is finite. 

By assumption, choose two disjoint halfspaces $h, h'$ separated  by each hyperplane of $\cH_{C, C'}$. Up to applying the Strong Separation Lemma \ref{CSstuff}, we can further assume that $h$ and $h'$ are strongly separated. In particular, since the bridge between $h $ and $h'$ is finite by Lemma \ref{uber}, it follows that $\mbox{Stab}(\hat h)\cap \mbox{Stab}(\hat h')$ is virtually contained in the pointwise stabiliser of a geodesic between the two gates of the bridge $b(\hat h, \hat h')$. As such a geodesic crosses each hyperplane of $\cH_{C, C'}$ by construction of $h, h'$, it follows that $\mbox{Stab}(\hat h)\cap \mbox{Stab}(\hat h')$ is virtually contained in $\bigcap_{\hat k \in \cH_{C,C'}} \mbox{Stab}(\hat k)$, which is finite by the above argument. Thus, $\mbox{Stab}(h)\cap \mbox{Stab}(h')$  is finite, and we conclude with  Theorem \ref{Main}. 
\end{proof}

The following proposition gives a class of examples of CAT(0) cube complexes where each pair of cubes separates a pair of hyperplanes: 

\begin{prop}\label{cubes_separated}
Let $X$ be an irreducible CAT(0) cube complex without free face, and such that Aut$(X)$ acts cocompactly on $X$. Then each pair of cubes separates a pair of hyperplanes.
\end{prop}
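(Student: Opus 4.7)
The plan is to proceed in three steps: reduce to maximal cubes, find vertices with the desired separation property, and then upgrade those vertices to hyperplanes.

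First, if $C \subseteq \widetilde{C}$ and $C' \subseteq \widetilde{C}'$ for maximal cubes $\widetilde{C}, \widetilde{C}'$, then $\cH_{C,C'} \subseteq \cH_{\widetilde{C}, \widetilde{C}'}$, so any pair of hyperplanes separated by the larger family also works for the original pair. Hence I may assume $C$ and $C'$ are both maximal. I would then produce $v \in C$ and $v' \in C'$ such that every hyperplane in $\cH_{C,C'}$ separates them: for each $\hat k \in \cH(C) \setminus \cH(C')$, the cube $C'$ lies entirely in one halfspace of $\hat k$ and I require $v$ in the opposite halfspace, which pins down a coordinate of $v$ in $C \cong \{+,-\}^{\cH(C)}$; symmetric constraints pin down coordinates of $v'$ in $C'$; the remaining free coordinates (for $\hat k \in \cH(C) \cap \cH(C')$) are assigned to opposite values. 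All constraints can be simultaneously satisfied by the independence of cube coordinates.

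For the upgrade, I would produce $\hat h$ lying on the $v$-side of every hyperplane separating $v$ from $v'$ (hence on the $v$-side of every $\hat k \in \cH_{C,C'}$) and symmetrically $\hat h'$ on the $v'$-side; then each $\hat k \in \cH_{C,C'}$ cleanly separates $\hat h$ from $\hat h'$. Starting from any hyperplane $\hat h^{(0)}$ separating $v$ from $v'$ (say, one in $\cH_{C,C'}$), I would iterate the Strong Separation Lemma (Lemma \ref{CSstuff}) applied to the halfspace of the current hyperplane containing $v'$: each iteration yields a strongly separated hyperplane further toward $v$, and by strong separation the previous hyperplane now separates $v$ from the new one, so the combinatorial distance from $v$ to the current hyperplane strictly decreases at each step. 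After at most $d(v, \hat h^{(0)})$ iterations, the current hyperplane $\hat h$ has passed $v$. By the transitivity of strong separation along the nested chain (Remark \ref{here}) together with the pairwise-crossing structure of $\cH(C)$ and $\cH(C')$, $\hat h$ ends up strongly separated from every element of $\cH_{C,C'}$ and hence lies on the $v$-side of each.

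The hard part is this third step: the Strong Separation Lemma guarantees separation from only one hyperplane at a time, while we need $\hat h$ properly positioned with respect to the entire finite family $\cH_{C,C'}$. The resolution uses the pairwise-crossing structure of cube hyperplanes (strong separation from one element of $\cH(C)$ implies disjointness from all of $\cH(C)$, since every other element of $\cH(C)$ crosses $\hat h^{(0)}$ and cannot simultaneously cross a hyperplane strongly separated from $\hat h^{(0)}$) together with a careful choice of starting hyperplane (for instance in $\cH(C) \cap \cH(C')$ when non-empty, or via an additional iteration from a hyperplane in $\cH(C')$) so that both families are handled simultaneously. The cocompactness of $\mathrm{Aut}(X)$ and the absence of free faces ensure the Strong Separation Lemma remains applicable throughout, via the essential non-elementary action of $\mathrm{Aut}(X)$ on $X$.
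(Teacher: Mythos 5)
Your first two steps are fine and match the paper in spirit: the reduction to maximal cubes is immediate, and your coordinate argument produces vertices $v\in C$, $v'\in C'$ separated by every hyperplane of $\cH_{C,C'}$ (the paper gets the same vertices by choosing $x\in C$, $x'\in C'$ maximizing $d(x,x')$). The gap is in the third step, and it is exactly at the point you flag as ``the hard part''. Iterating the Strong Separation Lemma starting from $\hat h^{(0)}\in\cH_{C,C'}$ does produce a chain of pairwise strongly separated hyperplanes marching towards $v$, and the resulting $\hat h$ is indeed \emph{disjoint} from every other $\hat k\in\cH_C$ (since $\hat k$ crosses $\hat h^{(0)}$, it cannot cross anything strongly separated from $\hat h^{(0)}$, nor anything beyond). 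But disjointness gives no control over \emph{which} halfspace of $\hat k$ the hyperplane $\hat h$ lands in: the Strong Separation Lemma only asserts the existence of \emph{some} strongly separated halfspace $h^{(i)}\subset h^{(i+1)}$, and $\hat h^{(i+1)}$ may sit in either of the two sectors $h^{(i)*}\cap k$ or $h^{(i)*}\cap k^*$. Your sentence ``$\hat h$ ends up strongly separated from every element of $\cH_{C,C'}$ and hence lies on the $v$-side of each'' is a non sequitur: strong separation from $\hat k$ says nothing about sides. Nothing in the iteration prevents $\hat h$ from ending up on the $v'$-side of some $\hat k\in\cH_C\setminus\{\hat h^{(0)}\}$, in which case $\hat k$ fails to separate $\hat h$ from $\hat h'$.

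That this cannot be patched by a ``careful choice of starting hyperplane'' is shown by Remark \ref{Talia}: in Fern\'os's spiked-cube complex the conclusion of the proposition genuinely fails, precisely because some sectors contain no hyperplane at all, even though the complex is irreducible, cocompact and essential, so the Strong Separation and Double-Skewering machinery is fully available there. Any correct proof must therefore use the no-free-face hypothesis to control sectors, whereas your argument only invokes it (vaguely, and unnecessarily) to justify applicability of the Strong Separation Lemma. The paper's resolution is the Strong Sector Lemma \ref{strongsector}: \emph{every} sector determined by a finite family of pairwise crossing hyperplanes contains a hyperplane. This is proved by induction on the number of hyperplanes, with base case $n=2$ coming from geodesic completeness (Lemma \ref{extend}, which is where no-free-face enters) via \cite[Proposition 3.3]{SageevLecturesCCC}, and inductive step obtained by passing to a hyperplane, which inherits cocompactness and the no-free-face property. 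With that lemma one simply picks $\hat h$ in the sector of $\cH_C$ containing $x$ and $\hat h'$ in the sector of $\cH_{C'}$ containing $x'$. You need this ingredient, or something equivalent to it; the Strong Separation Lemma alone cannot deliver a hyperplane in a prescribed sector.
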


\begin{rem}\label{Talia}The following example, due to Talia Fern\'os, shows that the no-free-face assumption is necessary in Proposition \ref{cubes_separated}. Take a 3-dimensional cube $[0,1]^3$ and glue an edge to each of the vertices $(1,0,0),(0,1,0),(0,0,1)$ and $(1,1,1)$ to get a spiked cube. Then glue infinitely many of these spiked cubes in a tree-like way, to obtain a cocompact CAT(0) cube complex $X$ which is quasi-isometric to a tree. Then none of the 3-cubes of that complex is separating a pair of hyperplanes. Indeed, each 3-cube has three hyperplanes defining it, hence it defines eight sectors, out of which four contain hyperplanes and four are reduced to a single point, but among the sectors containing hyperplanes, no two are opposite.\end{rem}

Before proving  Proposition \ref{cubes_separated}, we start by a simple observation: 

\begin{lemma}\label{extend}
A CAT(0) cube complex with no free face is geodesically complete, that is, every finite geodesic can be extended to a bi-infinite geodesic.
\end{lemma}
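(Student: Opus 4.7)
The plan is to reduce the statement to a one-step extension: given any finite combinatorial geodesic $\gamma = (v_0, \ldots, v_n)$ in $X$, I seek a vertex $v_{n+1}$ adjacent to $v_n$ such that $(v_0, \ldots, v_n, v_{n+1})$ is again a geodesic. Iterating this forward, and by symmetry also backward, then produces the desired bi-infinite extension of any given finite geodesic.

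To prove the one-step extension, let $\hat{h}_1, \ldots, \hat{h}_n$ be the hyperplanes dual to the consecutive edges of $\gamma$ and let $h_i^+$ be the halfspace of $\hat{h}_i$ containing $v_n$. Finding the extension amounts to producing an edge at $v_n$ whose dual hyperplane does not belong to $\{\hat{h}_1, \ldots, \hat{h}_n\}$, or equivalently, an edge at $v_n$ lying in the convex subcomplex $K := \bigcap_{i=1}^n h_i^+$. I would argue by contradiction: supposing every edge at $v_n$ to be dual to some $\hat{h}_i$, I pick a maximal cube $C$ at $v_n$ whose edges at $v_n$ are labelled by a subset of $\{\hat{h}_1, \ldots, \hat{h}_n\}$, and exploit the no-free-face hypothesis. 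Each codimension-one face of $C$ containing $v_n$ is non-maximal, hence contained in at least one further maximal cube $C' \neq C$; such a cube $C'$ contributes a new edge at $v_n$, which under the contradiction hypothesis is still dual to some hyperplane of $\gamma$. Iterating this ``adding a new adjacent hyperplane at $v_n$'' procedure ought to eventually force an edge at $v_n$ dual to a fresh hyperplane.

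The main obstacle will be turning this no-free-face bookkeeping into a genuine contradiction. The cleanest setting is the link of $v_n$, which is a flag simplicial complex inheriting a corresponding no-free-simplex condition: every non-maximal simplex is contained in at least two maximal ones. Combining flagness with this combinatorial richness should rule out the possibility that the vertex set of the link is contained in the finite list of hyperplanes of $\gamma$ adjacent to $v_n$; phrased geometrically, the no-free-face hypothesis prevents $v_n$ from being a strict local maximum of the function $d(v_0, \cdot)$, which is exactly what is needed to continue the geodesic at $v_n$. I expect the delicate point is that no-free-face is only a local condition at $v_n$, while the set to be avoided, $\{\hat{h}_1, \ldots, \hat{h}_n\}$, is determined by the global geodesic $\gamma$, so the argument will have to carefully interface the local link combinatorics with the walls separating $v_n$ from $v_0$.
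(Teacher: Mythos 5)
Your reduction to a one-step extension and your criterion for when an edge at $v_n$ extends the geodesic (its dual hyperplane must avoid $\{\hat h_1,\dots,\hat h_n\}$) match the paper's proof, but the core of your argument is missing, and you say so yourself: the ``iterating this procedure ought to eventually force a fresh hyperplane'' step is not justified, and there is no reason for your iteration to terminate or to exhaust the finite list of hyperplanes of $\gamma$. Starting from an arbitrary maximal cube at $v_n$ and repeatedly invoking no-free-face on its codimension-one faces can a priori keep returning edges dual to hyperplanes already in the list, and nothing in your setup rules this out.

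The missing idea, which is the heart of the paper's proof, is that the set $E$ of ``bad'' edges at $v_n$ spans a \emph{single} cube. Concretely: if $e,e'\in E$ are dual to distinct hyperplanes $\hat h_i,\hat h_j$ of $\gamma$ and these hyperplanes were disjoint, then the two halfspaces containing $v_0$ would be nested, say $h_i^-\subset h_j^-$; the endpoint of $e$ other than $v_n$ lies in $h_i^-\subset h_j^-$, so the single edge $e$ would cross both $\hat h_i$ and $\hat h_j$, which is absurd. Hence the hyperplanes dual to bad edges pairwise cross, and since all these edges share the vertex $v_n$, flagness of the link promotes this to a cube $C$ spanned by $E$. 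Only now does the no-free-face hypothesis bite, and it does so in one step rather than by iteration: if $C$ is non-maximal it sits in a strictly larger cube, which supplies an edge at $v_n$ outside $E$; if $C$ is maximal, a codimension-one face of $C$ through $v_n$ lies in a second maximal cube $C'\neq C$, and the extra edge of $C'$ at $v_n$ cannot lie in $E$ (else $C'$ would contain all of $E$ and hence equal $C$). Without the pairwise-crossing observation your ``local link combinatorics versus global walls'' interface never closes, so as it stands the proposal has a genuine gap.
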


\begin{proof}
Let $\gamma$ be a finite geodesic defined by a sequence $e_1, \ldots, e_n$ of edges of $X$ and let $v$ be the terminal vertex of that finite geodesic. We will show that we can extend it by one edge. Let $E$ denote the set of edges of $X$, containing $v$ and such that $\gamma$ followed by $e\in E$ is not a geodesic. Then, every $e\in E$ has to be parallel to one of the edges $e_i$ defining $\gamma$, that is, every $e\in E$ defines a hyperplane $\hat{h}_e$ crossed by $\gamma$. Moreover, the map $e\mapsto\hat{h}_e$ is injective as two adjacent edges cannot belong to the same hyperplane, and any two $\hat{h}_e$, $\hat{h}_{e'}$ intersect. Indeed, for an edge $e$ to belong to $E$ it means that $\gamma$ has been traveling on the carrier of the hyperplane defined by some $e_i$ after having crossed $e_i$, and that can be done simultaneously for several hyperplanes only when they cross each other. Hence $E$ defines a cube in $X$, and since there are no free faces there is an edge $e$ containing $v$ and that does not belong to $E$, allowing us to extend by one edge the geodesic $\gamma$.
 \end{proof}
We will also need the following strengthening of the Sector Lemma:
\begin{lemma}[Strong Sector Lemma]\label{strongsector}
Let $X$ be a CAT(0) cube complex with no free face, and assume that the automorphism group of $X$ acts cocompactly on $X$. Then each sector determined by a finite family of pairwise crossing hyperplanes contains a hyperplane.
\end{lemma}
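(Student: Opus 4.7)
The plan is to proceed by induction on $n$, the number of pairwise crossing hyperplanes, using the Strong Separation Lemma \ref{CSstuff} together with the no-free-face condition. Throughout, we implicitly rely on irreducibility (as needed for the Strong Separation Lemma to apply), inherited from the intended context of use.

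The base case $n=1$ is immediate from the Strong Separation Lemma: given a halfspace $h$, applying the lemma to $h^*$ yields a halfspace $h'\supset h^*$ with $\hat h$ and $\hat h'$ strongly separated, so $\hat h'$ does not cross $\hat h$ and, since $(h')^*\subset h$, is contained in $h$.

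For the inductive step, let $\hat h_1,\dots,\hat h_n$ be pairwise crossing, $S=\bigcap_i h_i^{\eps_i}$ a sector, $C=\bigcap_i N(\hat h_i)$ the $n$-cube they span, and $v\in C\cap S$ the corner vertex. Apply the Strong Separation Lemma to $h_1^{-\eps_1}$ to obtain a halfspace $p\supset h_1^{-\eps_1}$ with $\hat h_1$ and $\hat p$ strongly separated; then $\hat p$ lies on the $S$-side of $\hat h_1$ and $p^*\subset h_1^{\eps_1}$. The key observation is that each $\hat h_j$ with $j\geq 2$ crosses $\hat h_1$, so by strong separation $\hat h_j$ cannot cross $\hat p$; hence $\hat p$ lies in one halfspace of every $\hat h_j$. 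The plan is now to build a geodesic $\gamma$ from $v$ inside $S$ that reaches a vertex of $p^*$: then $\gamma$ crosses $\hat p$ at a point lying in $S\subseteq h_j^{\eps_j}$, so $\hat p\cap h_j^{\eps_j}\neq\emptyset$; combined with $\hat p$ lying in one halfspace of $\hat h_j$, this forces $\hat p\subset h_j^{\eps_j}$ for all $j\geq 2$, yielding $\hat p\subset S$.

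To construct $\gamma$, we first use the no-free-face assumption at $v$: the $n$ edges of $C$ at $v$ form an $(n-1)$-simplex $\sigma_C$ in the link $\mathrm{Lk}(v)$, and since $v$ is non-maximal (being a vertex of $C$), the no-free-face hypothesis forces $v$ to lie in at least two maximal cubes, hence $\mathrm{Lk}(v)$ has a maximal simplex not contained in $\sigma_C$. This provides an edge $e$ at $v$ outside $C$, which crosses no $\hat h_i$ and so enters $S$. We then iteratively extend $e$ inside $S$ via Lemma \ref{extend}, reapplying a similar link analysis at each successive vertex: at a vertex $u$ of the already-constructed piece of $\gamma$, the forbidden edges (those whose hyperplane has already been crossed, and those crossing back across some $\hat h_i$) span a subsimplex of $\mathrm{Lk}(u)$, and the no-free-face assumption at $u$ is used to ensure this subsimplex is not itself a maximal simplex, so that an admissible edge exists. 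The main obstacle is exactly this iterative extension: one must simultaneously verify that the forbidden subsimplex is strictly smaller than a maximal simplex at each step, and that $S\cap p^*$ is non-empty so that $\gamma$ can actually terminate in $p^*$. The cocompactness hypothesis provides the uniform control needed for this combinatorial bookkeeping, and a finer iterated use of strong separation applied to each $\hat h_j$ in turn may be needed to guarantee that $p^*$ indeed meets $S$.
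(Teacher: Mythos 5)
Your proposal has two genuine gaps, and the first is fatal to the overall strategy. The Strong Separation Lemma \ref{CSstuff} is not available under the hypotheses of the Strong Sector Lemma: it requires $X$ to be irreducible and finite-dimensional and requires an essential action with no fixed point in $X\cup\partial_\infty X$, none of which are assumed here. This is not a technicality you can wave away by ``inheriting irreducibility from the intended context'': strongly separated pairs simply do not exist in a reducible complex (as the paper notes, citing \cite[Proposition 5.1]{CapraceSageevRankRigidity}), yet the standard cubulation of $\bbR^2$ satisfies every hypothesis of Lemma \ref{strongsector} and the conclusion holds for it. Any proof built on producing a strongly separated halfspace $\hat p$ cannot work at this level of generality. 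Moreover, even in the irreducible setting the second issue remains: the two steps you explicitly defer --- extending a geodesic from $v$ that stays inside the sector $S$, and showing $S\cap p^*\neq\emptyset$ so that the geodesic can terminate there --- are essentially the content of the lemma itself. The link analysis you sketch does not follow from the no-free-face condition (the forbidden edges at a vertex $u$ need not span a single cube once you also forbid re-crossing the $\hat h_i$), and asserting that cocompactness ``provides the uniform control'' is not an argument.

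For comparison, the paper's proof avoids both problems by a different induction. The base case is $n=2$, which is \cite[Proposition 3.3]{SageevLecturesCCC} applied to a geodesically complete cocompact cube complex (geodesic completeness being exactly Lemma \ref{extend}); no irreducibility or essentiality is needed. For the inductive step one passes to the hyperplane $\hat h_1$ regarded as a CAT(0) cube complex in its own right: by Helly's theorem the $\hat h_i\cap\hat h_1$ ($i\neq 1$) are pairwise crossing hyperplanes of $\hat h_1$, the no-free-face and cocompactness hypotheses descend to $\hat h_1$, and the induction hypothesis produces a hyperplane $\hat h$ of $X$ disjoint from all $\hat h_i$ ($i\neq1$), contained in $\bigcap_{i\neq1}h_i$, and crossing $\hat h_1$; the $n=2$ case applied to $\hat h$ and $\hat h_1$ then finishes. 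Note that this induction \emph{must} be robust under passing to hyperplanes, which may be reducible even when $X$ is not --- another reason the Strong Separation Lemma is the wrong tool here.
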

\begin{proof}
We prove the result by induction on the number $n \geq 2$ of pairwise crossing hyperplanes. For $n=2$, the result follows from \cite[Proposition 3.3]{SageevLecturesCCC}, since a CAT(0) cube complex with no free face is geodesically complete by Lemma \ref{extend}. Let $\hh_1, \ldots, \hh_n$ be a family of pairwise crossing hyperplanes, and let $h_i$ be a halfspace associated to $\hh_i$ for every $i$.  We want to construct a hyperplane contained in $\bigcap_i h_i$. 

By Helly's theorem, the family $(\hh_i \cap \hh_1)_{i\neq 1}$ defines a family of pairwise crossing hyperplanes of $\hh_1$. Note that $\hh_1$ also satisfies the property of having no free face, and the action of $\mbox{Stab}(\hh_1)$ on $\hh_1$ is cocompact, as the same holds for the action of $\mbox{Aut}(X)$ on $X$. Thus, one can apply the induction hypothesis to find a hyperplane $\hh'$ of $\hh_1$ contained in the sector $\bigcap_{i\neq 1} (\hh_1 \cap h_i)$. This defines a hyperplane of $X$, which we denote $\hh$. Since $\hh'$ is disjoint from the $\hh_1 \cap \hh_i$ for $i \neq 1$, it follows from Helly's theorem that $\hh$ is disjoint from the $\hh_i$ for $i \neq 1$. Thus, let $h$ be the halfspace of $\hh$ contained in $\bigcap_{i \neq 1} h_i$. Since $\hh$ and $\hh_1$ cross,  we  can choose by the induction hypothesis  a hyperplane contained in the sector  $h \cap h_1$, hence in $\bigcap_i h_i$. 
\end{proof}

\begin{proof}[Proof of Proposition \ref{cubes_separated}]
It is enough to prove the proposition when $C$ and $C'$ are maximal.  Choose $x$ and $x'$ vertices of $C$,  $C'$ respectively that maximize the distance between vertices of $C$ and $C'$. Let $\cH_C$, $\cH_{C'}$ be the family of hyperplanes defined by an edge of $C$, $C'$ respectively. By the Strong Sector Lemma \ref{strongsector}, each sector determined by $\cH_C$ or $\cH_{C'}$ contains a hyperplane. Thus, choose a hyperplane $\hat h$ ($\hat h'$ respectively) in the unique sector determined by $\cH_C$ ($\cH_{C'}$ respectively) that contains $x$ ($x'$ respectively).  By construction of $x$ and $x'$, we have that $x$ and $x'$ are in opposite sectors defined by $\cH_C$, and in opposite sectors determined by $\cH_{C'}$.  In particular, every hyperplane of $\cH_{C, C'}$ separates $x$ and $x'$, hence separates $\hat h$ and  $\hat h'$.  
\end{proof}

\begin{proof}[Proof of Theorem \ref{MainVert}]
Theorem \ref{MainVert} is a direct consequence of Theorem \ref{MainCubesSeparated} and Proposition \ref{cubes_separated}.
\end{proof}

Theorem \ref{MainVert} allows for a very simple geometric proof of the acylindrical hyperbolicity of certain groups:

\begin{ex}\label{Tame}
The group of tame automorphisms of an affine quadric threefold, a subgroup of the $3$-dimensional Cremona group Bir$(\bbP^3(\bbC))$, acts cocompactly, essentially and non-elementarily on a hyperbolic CAT(0) cube complex without free face \cite{LamyCremonaSquareComplexes}. The second author showed that there exist two cubes whose stabilisers intersect along a finite subgroup \cite[Proof of Theorem 3.1]{MartinTame}, hence Theorem \ref{MainVert} applies.
\end{ex}
\section{Artin groups of type FC}\label{Artin}
We now give an application of our results to the class of Artin groups of FC type, studied by Charney and Davis in \cite{CD} and that we now describe. Recall that a \emph{Coxeter graph} is a finite, simplicial graph $\Gamma$ with vertex set $S$ and edges labeled by integers greater or equal to $2$.  The label of the edge connecting two vertices $s$ and $t$ is denoted $m(s, t)$, and we set $m(s, t) =\infty$ if $s$ and $t$ are not connected by an edge. The \emph{Artin group associated to a Coxeter graph} $\Gamma$ is the group given by the presentation: 
$$A =\left<S  | \underbrace{sts\dots}_{m(s,t)} = \underbrace{tst\dots}_{m(s,t)}|s,t \hbox{ connected by an edge with label }m(s,t)\right>.$$
Adding the extra relations $s^2 = 1$ for all $s\in S$, we obtain a Coxeter group $W$ as a quotient of $A$. We say that $A$ is \emph{finite type} if the associated Coxeter group $W$ is finite.
It was shown in \cite{Lek} that, if $T\subseteq S$, the subgroup $A_T$  generated by $T$ is isomorphic to the Artin  group associated to the full subgraph of $\Gamma$ spanned by $T$. Such subgroups are called \emph{special subgroups} of $A$. 
Following Charney--Davis, we say that an Artin group is of \emph{FC type} if every complete subgraph of the Coxeter graph $\Gamma$ generates a special subgroup of finite type.

Given an Artin group $A$, the \emph{Deligne complex} ${\mathcal D}_A$ is the cubical complex defined as follows: 
Vertices of $\cD_A$ correspond to cosets $aA_T$, where $a \in A$ and $ T\subseteq S$. Note that we allow $T =\emptyset$, in which case $aA_T = \{a\}$.  The $1$-skeleton $\cD_A^1$ of $\cD_A$ is obtained by putting an edge between cosets of the form $aA_T$ and $aA_{T'}$, when $T'=T\cup\{t'\}$ for some $t'\in S\setminus T$.  In particular, each edge of $\cD_A$ is labeled by an element of $S$.  Finally, $\cD_A$ is obtained by ``filling the cubes'', that is, by glueing a $k$-cube whenever $\cD_A^1$ contains a subgraph isomorphic to the $1$-skeleton of a $k$-cube. 

In \cite{CD} Theorem 4.3.5 Charney and Davis show that the Deligne complex $\cD_A$ of an Artin group $A$  is a CAT(0) cube complex if and only if $A$ is of FC type. Edges and hyperplanes in this CAT(0) cube complex are labeled by elements of $S$. Moreover, each hyperplane is a translate of a hyperplane $\hat{h}_s$  for some $s\in S$, where  $\hat{h}_s$ denotes the hyperplane defined by the edge between $\{1\}$ and $A_{\{s\}}$. It is straightforward to check that the stabiliser of $\hat{h}_s$ is the special subgroup $A_{lk(s)}$ of $A$ determined by the link of the vertex $s$ in $\Gamma$.

The \emph{dimension} of an Artin group is the maximum cardinality of a subset $T\subseteq S$ such that $A_T$ is finite type, and it is equal to the dimension of  ${\mathcal D}_A$. In particular, ${\mathcal D}_A$ is finite dimensional since the graph $\Gamma$ is finite. The Artin group $A$ acts by left multiplication on the aforementioned cosets, and hence acts by isometries on ${\mathcal D}_A$. Since $\Gamma$ is finite, the action is cocompact. The stabilizer of a vertex $aA_T$  of $\cD_A$ is the subgroup $aA_Ta^{-1}$. In particular, the action is not proper if $\Gamma$ is non-empty.

The action satisfies the following:
\begin{prop}\label{DeligneAction}
Let $A$ be an Artin group of type FC associated to a Coxeter graph $\Gamma$ of diameter at least 3. Then  the Deligne complex $\cD_A$ is irreducible and the action of $A$ on $\cD_A$ is essential and non-elementary.
\end{prop}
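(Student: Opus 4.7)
The plan is to prove the three claims of Proposition \ref{DeligneAction}---irreducibility, non-elementarity, and essentiality---in that order, each time exploiting the hypothesis $\mathrm{diam}(\Gamma) \geq 3$.

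\emph{Irreducibility.} First I would analyse the link $L$ of the vertex $v_0 := A_\emptyset \in \cD_A$: its vertices are the elements of $S$, and its simplices are exactly the finite-type subsets of $S$. Since the link at every vertex of a non-trivial product of CAT(0) cube complexes is a non-trivial spherical join, it suffices to check that $L$ is not a non-trivial simplicial join. Suppose for contradiction $L = L_1 * L_2$ with $V(L_i) \neq \emptyset$; this gives a partition $S = S_1 \sqcup S_2$ in which every mixed pair $(s, t) \in S_1 \times S_2$ must satisfy $m(s, t) < \infty$, i.e.\ be $\Gamma$-adjacent. Choose a $\Gamma$-geodesic $s_0 s_1 s_2 s_3$ of length $3$: its endpoints are not $\Gamma$-adjacent, so they must lie in the same part, say both in $S_1$; but then any vertex of $S_2$ is $\Gamma$-adjacent to both $s_0$ and $s_3$, forcing $d_\Gamma(s_0, s_3) \leq 2$, a contradiction.

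\emph{Non-elementarity.} The hypothesis $\mathrm{diam}(\Gamma) \geq 3$ forces $\Gamma$ to be incomplete, so $A$ is not of finite type and the coset $A_S$ is not a vertex of $\cD_A$. Using $\mbox{Stab}(aA_T) = aA_Ta^{-1}$, any global fixed vertex $aA_T$ would force $A_T = A$, hence $T = S$, which is excluded; by Bruhat--Tits plus the finite symmetries of cubes, any fixed point of $A$ inside $\cD_A$ would pass to a finite-index subgroup fixing a vertex, leading to the same contradiction. To preclude a global fixed point at infinity, pick $s, t \in S$ with $d_\Gamma(s, t) \geq 3$, so that $\langle s, t \rangle \cong F_2$, and invoke a ping-pong argument to produce two hyperbolic isometries of $\cD_A$ with disjoint pairs of fixed points in $\partial_\infty \cD_A$.

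\emph{Essentiality.} Since $A$ acts transitively on hyperplanes of each type, it suffices to show that for every $u \in S$ the hyperplane $\hat{h}_u$ is not shallow. Fix $v \in S$ with $d_\Gamma(u, v) \geq 3$ and set $g := uv$. By the parabolic intersection theorem for Artin groups of FC type, $\langle u, v \rangle \cap A_T = \{1\}$ for every finite-type $T$ (since $\{u, v\}$ is not finite-type), so $g$ is conjugate into no finite-type parabolic and therefore fixes no vertex; hence $g$ is hyperbolic. I would then trace the edge-path $\{1\} \to A_{\{u\}} \to \{u\} \to uA_{\{v\}} \to \{uv\} = g \cdot \{1\}$, identify the four hyperplanes it crosses, and use the same parabolic intersection fact to check that no two hyperplanes on the concatenated path $\{1\} \to \{uv\} \to \{(uv)^2\} \to \cdots$ coincide. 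This shows the path is a combinatorial geodesic that assembles into an axis of $g$ crossing $\hat{h}_u$ exactly once; consequently $g$ skewers $\hat{h}_u$, and its iterates $g^n \cdot \{1\}$ for $n \to \pm \infty$ supply orbit points arbitrarily deep in both halfspaces of $\hat{h}_u$.

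The hardest step will be the combinatorial verification at the end: showing that the concatenated path along the $g$-orbit of $\{1\}$ crosses pairwise distinct hyperplanes and hence is a genuine axis. It hinges on the parabolic intersection statement $\langle u, v \rangle \cap A_T = \{1\}$ for finite-type $T$, together with the description $\mbox{Stab}(\hat{h}_u) = A_{lk(u)}$ of hyperplane stabilisers recalled above. Both inputs are standard in the FC-type literature, so once they are in place the geometric argument is essentially mechanical.
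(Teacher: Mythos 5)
Your irreducibility argument is correct and is essentially the paper's (Lemma \ref{Deligne_irreducible}): both reduce to showing that the link of the vertex $\{1\}$ is not a non-trivial join, using a geodesic of length $3$ in $\Gamma$. The other two parts each have a genuine gap. For essentiality, you fix $u\in S$ and then ``fix $v\in S$ with $d_\Gamma(u,v)\geq 3$''. Such a $v$ need not exist: $\mathrm{diam}(\Gamma)\geq 3$ guarantees that \emph{some} pair of vertices is at distance $3$, not that every vertex has eccentricity $\geq 3$. For the path $s_0\,\text{--}\,s_1\,\text{--}\,s_2\,\text{--}\,s_3$ the diameter is $3$, but $s_1$ is within distance $2$ of every vertex, so your argument never addresses $\hat{h}_{s_1}$. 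The paper's Lemma \ref{Deligne_essential} only requires a geodesic $s_0,s_1,s_2$ of length $2$ issuing from the given vertex (which always exists when $\Gamma$ is connected of diameter $\geq 3$), and pushes points arbitrarily deep into both halfspaces of $\hat{h}_{s_0}$ using the free group $A_{\{s_0,s_2\}}$ stabilising $\hat{h}_{s_1}$. Your skewering strategy could likely be repaired by taking $v$ with $d_\Gamma(u,v)\geq 2$ (so that $m(u,v)=\infty$ still holds), but the combinatorial verification of the axis then has to be redone and you have not done it.

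For non-elementarity, the step ``invoke a ping-pong argument to produce two hyperbolic isometries with disjoint \emph{pairs} of fixed points in $\partial_\infty\cD_A$'' assumes exactly what has to be proved, namely that these isometries fix only the two endpoints of their axes at infinity. The Deligne complex is CAT(0) but not hyperbolic in general, and a hyperbolic isometry of a CAT(0) space whose axis bounds a flat half-strip fixes a whole arc of the visual boundary; ping-pong gives no control over this, so disjointness of axes does not preclude a common fixed point at infinity. This is precisely the content the paper supplies: the axis of $g=s_0s_3$ is built so that consecutive segments meet at angle $3\pi/2>\pi$ in the link (this is where $d_\Gamma(s_0,s_3)=3$ is used), and the Flat Half-strip Theorem \ref{Half_Strip} then shows that no geodesic ray issuing from the axis can be asymptotic to its $g^2$-translate, whence $g$ fixes only $g^{\pm\infty}$; the same analysis for $h=s_0^2s_3$ and the Flat Strip Theorem finish the argument. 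Without some such rank-one/contraction input for your elements, the boundary part of non-elementarity is unproved. (By contrast, the in-complex part is easier than you make it: once essentiality is established, every orbit is unbounded, so there is no finite orbit in $\cD_A$ itself and no appeal to indices of special subgroups is needed.)
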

The proof will take most of this section. We can assume that $\Gamma$ is connected for otherwise $A$ is a free product, $\cD_A$ has a structure of a tree of spaces, and the result follows. We check separately the irreducibility of the Deligne complex, the essentiality of the action, and the non-elementarity of the action.

\begin{lemma}\label{Deligne_irreducible}
The CAT(0) cube complex $\cD_A $ is irreducible.
\end{lemma}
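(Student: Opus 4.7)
The plan is to exploit the fact that hyperplanes of the Deligne complex carry a well-defined label in $S$, and to use the diameter assumption to rule out any nontrivial product decomposition. Recall that a finite-dimensional CAT(0) cube complex $X$ is reducible precisely when $X = X_1 \times X_2$ with both factors nontrivial; in that case any hyperplane of $X_1$ crosses any hyperplane of $X_2$. So to establish irreducibility, it suffices to exhibit, for any candidate splitting, two hyperplanes that fail to cross yet would be forced to lie in different factors.

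First I would set up the labeling. Each edge of $\cD_A$ of the form $aA_T \to aA_{T \cup \{s\}}$ carries the natural label $s \in S$, and opposite edges in a $2$-cube have the same label. Since hyperplanes are equivalence classes of edges under the parallelism relation generated by $2$-cubes, each hyperplane of $\cD_A$ has a well-defined label in $S$, and this coincides with the description recalled from \cite{CD} that every hyperplane is an $A$-translate of some $\hat h_s$.

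The crucial observation is the following crossing criterion: if two hyperplanes of $\cD_A$ with labels $s$ and $t$ cross, then $m(s,t) < \infty$, i.e.\ $s$ and $t$ are adjacent in $\Gamma$. Indeed, any two transverse hyperplanes of a CAT(0) cube complex jointly pass through some $2$-cube, and $2$-cubes of $\cD_A$ are exactly the translates $aA_{\{s',t'\}}$ with $A_{\{s',t'\}}$ of finite type; the edges of such a square carry labels $s'$ and $t'$, forcing $\{s',t'\}=\{s,t\}$ and hence $m(s,t) < \infty$.

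Now assume for contradiction that $\cD_A = X_1 \times X_2$ is a nontrivial product. Using the diameter assumption, choose $s_0, t_0 \in S$ with $d_\Gamma(s_0, t_0) \geq 3$; then $s_0$ and $t_0$ are non-adjacent and share no common neighbour in $\Gamma$. By the crossing criterion the standard hyperplanes $\hat h_{s_0}$ and $\hat h_{t_0}$ do not cross, so they cannot lie in different factors; say both lie in $X_1$. Since $X_2$ is nontrivial, it contains at least one hyperplane, whose label (as a hyperplane of $\cD_A$) is some $u \in S$. Being in a different factor, this hyperplane crosses both $\hat h_{s_0}$ and $\hat h_{t_0}$, so by the crossing criterion $u$ is adjacent in $\Gamma$ to both $s_0$ and $t_0$, producing a path $s_0 - u - t_0$ of length $2$ and contradicting $d_\Gamma(s_0,t_0)\geq 3$. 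Thus $\cD_A$ is irreducible.

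The only subtle step is the crossing criterion: the forward direction uses the standard fact that two crossing hyperplanes cobound a $2$-cube, together with the explicit cubical structure of $\cD_A$ in terms of finite-type parabolics. The remaining bookkeeping about labels and orbits is routine.
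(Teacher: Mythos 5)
Your proof is correct, but it takes a genuinely different route from the paper's. The paper argues locally at a single vertex: the link of the vertex $\{1\}$ in $\cD_A$ is identified (via the edge labeling) with $\Gamma$ itself, so it has diameter at least $3$ and therefore cannot be a simplicial join --- whereas every vertex link in a nontrivial product of cube complexes is a join and hence has diameter at most $2$. You instead argue globally through the crossing graph of hyperplanes: crossing hyperplanes share a square, squares correspond to spherical pairs $\{s',t'\}$, so crossing hyperplanes have adjacent labels; in a product every hyperplane of one factor crosses every hyperplane of the other, and two labels at distance $\geq 3$ in $\Gamma$ then force a common neighbour, a contradiction. Both arguments hinge on exactly the same combinatorial input (squares of $\cD_A$ correspond to edges of $\Gamma$, plus the diameter hypothesis), and both are essentially one paragraph; the paper's version is marginally slicker because it needs no discussion of well-definedness of hyperplane labels, while yours only uses the easy direction of the product/join-of-crossing-graph correspondence rather than the join characterisation of links, and in that sense is closer in spirit to how irreducibility is used elsewhere in the paper (e.g.\ via strongly separated, hence non-crossing, hyperplanes). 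One cosmetic point: a $2$-cube of $\cD_A$ is a translate of a square spanned by $aA_T$ and $aA_{T\cup\{s',t'\}}$ for a possibly nonempty $T$, not only of the square below $aA_{\{s',t'\}}$; this does not affect your conclusion since the edge labels of such a square are still $s'$ and $t'$ with $m(s',t')<\infty$.
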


 \begin{proof} Consider the vertex $\{1\}$ of $\cD_A$, where $1$ denotes the identity element. The labeling of the edges of $\cD_A$ yields a surjective map $lk(\{1\}) \ra \Gamma$. As $\Gamma$ has diameter at least $3$, so does $lk(\{1\})$, and it follows that $lk(\{1\})$ is not a join, hence $\cD_A$ is an irreducible CAT(0) cube complex.
\end{proof}

\begin{lemma}\label{Deligne_essential}
The action of $A$ on $\cD_A $ is essential.
\end{lemma}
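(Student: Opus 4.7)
The plan is to show every hyperplane of $\cD_A$ is essential. Since each hyperplane is a translate of some $\hat h_s$, $s \in S$ (as recalled in the excerpt), and essentiality of a hyperplane is an $A$-invariant property, it suffices to treat $\hat h_s$ for each $s \in S$. I will do so by producing, for each such $s$, an element $g \in A$ that acts hyperbolically on $\cD_A$ and whose combinatorial axis crosses $\hat h_s$.

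Fix $s \in S$. Since $\Gamma$ is connected with diameter at least $3$, no vertex of $\Gamma$ is adjacent to all the others, so we may choose $t \in S$ with $m(s,t)=\infty$. By \cite{Lek}, the parabolic subgroup $\langle s,t\rangle = A_{\{s,t\}}$ is then the Artin group on two generators with no relation, i.e.\ the free group of rank $2$, so $g:=st$ has infinite order. Consider the edge path
\[
\gamma\colon\ \{1\} \;-\; A_{\{s\}} \;-\; \{s\} \;-\; sA_{\{t\}} \;-\; \{st\}
\]
in the $1$-skeleton of $\cD_A$, whose four consecutive edges are dual to the hyperplanes $\hat h_s$, $s\hat h_s$, $s\hat h_t$, $st\hat h_t$. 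The key claim is that these hyperplanes are pairwise distinct: $\gamma$ is then a combinatorial geodesic of length $4$, and concatenating its $g$-iterates produces a geodesic of length $4k$ from $\{1\}$ to $\{g^k\}$ for every $k\geq 1$. Granted this, $g$ acts hyperbolically with translation length $4$; its combinatorial axis (which exists by Haglund, as invoked in the proof of Lemma \ref{gates}) passes through $\{1\}$, and since $\gamma$ crosses $\hat h_s$ exactly once, this axis crosses $\hat h_s$ transversally. The orbit $\{g^k\cdot\{1\}\}_{k\in\bbZ}$ therefore enters both halfspaces of $\hat h_s$ and goes arbitrarily deep into each as $k\to\pm\infty$, proving essentiality.

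The main technical obstacle is the pairwise distinctness of the hyperplanes crossed by $\gamma$ and its iterates. Hyperplanes carrying different labels are of course distinct; for two hyperplanes of the same label, distinctness reduces to showing that certain elements of $\langle s,t\rangle$ lie in distinct left cosets of $\mbox{Stab}(\hat h_r)=A_{lk(r)}$, for $r\in\{s,t\}$. This follows from the standard parabolic intersection formula $A_{T_1}\cap A_{T_2}=A_{T_1\cap T_2}$ applied to $T_1=lk(r)$ and $T_2=\{s,t\}$: since $lk(r)\cap\{s,t\}=\emptyset$ for $r\in\{s,t\}$, the intersection $A_{lk(r)}\cap\langle s,t\rangle$ is trivial, so distinct words in the free group $\langle s,t\rangle\cong F_2$ yield distinct cosets. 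This is the place where the hypothesis $m(s,t)=\infty$ (and hence the diameter assumption on $\Gamma$) is essentially used.
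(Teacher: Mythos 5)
Your route is genuinely different from the paper's (which takes a length-two geodesic $s_0,s_1,s_2$ in $\Gamma$ and uses the CAT(0)-convex, tree-like orbit of two squares under $A_{\{s_0,s_2\}}$), and most of it is correct: the reduction to essentiality of each $\hat h_s$, the choice of a non-adjacent $t$, the identification of the dual hyperplanes of $\gamma$, and the pairwise-distinctness argument via $\mbox{Stab}(\hat h_r)\cap A_{\{s,t\}}=A_{lk(r)}\cap A_{\{s,t\}}=A_{lk(r)\cap\{s,t\}}=\{1\}$ (which is on the same footing as the paper's own appeal to van der Lek in the proof of Theorem \ref{Ruth}). This correctly shows that the concatenated path is a bi-infinite combinatorial geodesic and hence an axis for $g$ crossing $\hat h_s$ exactly once.

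The gap is in the last sentence of your second paragraph: ``the orbit therefore \dots goes arbitrarily deep into each [halfspace]'' does not follow from what precedes it. Essentiality of $\hat h_s$ means each halfspace $h$ contains points at arbitrarily large distance from $h^*$, and the distance from a vertex $v\in h$ to $h^*$ is the number of hyperplanes separating $v$ from $h^*$, i.e.\ the number of halfspaces $k$ with $v\in k\subseteq h$. A geodesic can cross $\hat h_s$ exactly once and go to infinity while remaining at bounded distance from $h^*$ (it may simply run parallel to $\hat h_s$ forever, as a geodesic in a product does after crossing a hyperplane of one factor). What you actually need is that some power of $g$ skewers $\hat h_s$, i.e.\ $g^n h\subsetneq h$; for this you must show that $\hat h_s$ and $g^n\hat h_s$ are nested, not merely distinct, and you say nothing about transversality. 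The fix is short: either observe that the hyperplanes crossing a common cube of $\cD_A$ carry pairwise distinct labels (the two parallel classes of edges of a square $aA_\emptyset$, $aA_{\{u\}}$, $aA_{\{v\}}$, $aA_{\{u,v\}}$ are labelled $u$ and $v$), so that the same-label hyperplanes $(g^k\hat h_s)_{k\in\bbZ}$ are pairwise non-transverse and, being distinct and all crossed in order by the axis, form an infinite nested chain on each side of $\hat h_s$; or note that if $\hat h_s$ and $g^n\hat h_s$ were transverse for all $n\neq 0$, then $(g^n\hat h_s)_{n\in\bbZ}$ would be infinitely many pairwise transverse hyperplanes, contradicting finite dimensionality of $\cD_A$. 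With either addition your argument is complete.
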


 \begin{proof}Since the action is cocompact, it is enough to show that each hyperplane is essential, that is, no half-space is contained in a neighbourhood of the other halfspace. As each hyperplane is a  translate of some $\hh_s$, it is enough to show that this is the case for hyperplanes of the form $\hh_s$, where $s \in S$.  Let $s_0$ be a vertex of $\Gamma$. Since $\Gamma$ is connected and has diameter at least $3$, we can find  two distinct vertices $s_1, s_2$ of $\Gamma$ such that $s_0, s_1, s_2$ defines a geodesic of $\Gamma$. Then the hyperplane $\hh_{s_1}$ is in particular stabilized by $A_{\{s_0, s_2\}}$. Thus, to show that $\hh_{s_0}$ is essential, it is enough to show that $\hh_{s_1}$ is unbounded and crosses $\hh_{s_0}$. Let $C_{s_0,s_1}, C_{s_1, s_2}$ be the squares of $\cD_A$ containing the vertices $\{1\}, A_{\{s_0\}}, A_{\{s_1\}}$ and $\{1\}, A_{\{s_1\}}, A_{\{s_2\}}$ respectively. Notice that the edge between $A_{\{s_0\}}$ and $A_{\{s_0, s_1\}}$ has stabiliser $A_{\{s_0\}}$, and the edge between $A_{\{s_2\}}$ and $A_{\{s_1, s_2\}}$ has stabiliser $A_{\{s_2\}}$. Thus, the $A_{\{s_0, s_2\}}$-orbit $Y$ of $C_{s_0,s_1}\cup C_{s_1, s_2}$ defines a subcomplex of $\cD_A$ that is convex for the CAT(0) metric and quasi-isometric to a tree. Moreover, every point of $Y$ contained in a half-space of $\hh_{s_0}$ projects on the other half-space to a point of $C_{s_0,s_1}\cup C_{s_1, s_2}$. In particular, each half-space of $\hh_{s_0}$ contains points of $Y$ arbitrarily far away from the other half-space, hence $\hh_{s_0}$ is essential. 
\end{proof}

\begin{lemma}\label{finite_orbit}
The action of $A$ on $\cD_A $ is non-elementary.
\end{lemma}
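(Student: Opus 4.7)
The plan is to rule out separately finite $A$-orbits in $\cD_A$ and in $\partial_\infty \cD_A$.

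For the absence of finite orbits in $\cD_A$: such an orbit has a unique circumcenter, giving a global fixed point of $A$, and this point lies in the interior of a unique minimal cube $C$. The setwise $A$-stabiliser of $C$ coincides with the stabiliser of the vertex $aA_T$ of maximal label set, that is, with $aA_Ta^{-1}$ for some finite-type $T \subseteq S$. Requiring this to equal $A$ forces $T = S$ after conjugation, but the diameter $\geq 3$ assumption provides two vertices $s,t \in \Gamma$ at distance $3$, hence not joined by an edge, so that $W_S$ contains the infinite dihedral subgroup $\langle s, t \mid s^2 = t^2 = 1 \rangle$. Thus $S$ is not of finite type, a contradiction.

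For the absence of finite orbits in $\partial_\infty \cD_A$, we exploit the \"uber-separation machinery. By Lemmas \ref{Deligne_irreducible} and \ref{Deligne_essential}, Lemma \ref{lem:ss} applies, and combined with Lemma \ref{gates} it produces an \"uber-contraction $g \in A$. Such an element is a rank-one CAT(0) isometry displaying North--South dynamics on the visual boundary around a pair of fixed points $\xi^\pm \in \partial_\infty \cD_A$, and the setwise $A$-stabiliser of $\{\xi^+, \xi^-\}$ is virtually cyclic (a standard feature of rank-one isometries). Since $s_0, s_3 \in \Gamma$ are at graph distance $3$, they satisfy no Artin relation, so $\langle s_0, s_3 \rangle \cong F_2$ embeds in $A$ and $A$ is not virtually cyclic. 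Thus the stabiliser of $\{\xi^+,\xi^-\}$ has infinite index in $A$, and one can find $a \in A$ taking this pair to a disjoint one, yielding a second \"uber-contraction $g' := aga^{-1}$ with fixed-point set $\{a\xi^+, a\xi^-\}$ disjoint from $\{\xi^+, \xi^-\}$.

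To conclude, any finite $A$-invariant set $F \subseteq \partial_\infty \cD_A$ would be pointwise fixed by a finite-index subgroup of $A$, hence by sufficiently large powers of both $g$ and $g'$; by North--South dynamics, $F$ must then lie in both $\{\xi^+, \xi^-\}$ and $\{a\xi^+, a\xi^-\}$, forcing $F = \emptyset$. The main obstacle is producing two \"uber-contractions with genuinely disjoint fixed-point sets at infinity: this relies on the fact that the setwise stabiliser of a pair of rank-one fixed points is virtually cyclic, combined with $A$ not being virtually cyclic --- which is precisely where the diameter $\geq 3$ assumption enters, through the embedding of a free subgroup $F_2$ in $A$.
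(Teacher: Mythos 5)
Your first step (ruling out finite orbits in $\cD_A$ itself) is fine, if more elaborate than necessary: essentiality (Lemma \ref{Deligne_essential}), or the existence of a single hyperbolic isometry, already excludes bounded orbits. The serious problem is in the second step, and it is a circularity: Lemma \ref{lem:ss} (and the Caprace--Sageev machinery of Lemma \ref{CSstuff} behind it) has as a hypothesis that the action is essential \emph{and non-elementary}, i.e.\ has no finite orbit in $X \cup \partial_\infty X$. You cannot invoke it to manufacture the \"uber-contraction $g$ whose existence is supposed to establish non-elementarity. This is precisely why the paper does not argue this way: it constructs explicit hyperbolic elements $g = s_0s_3$ and $h = s_0^2 s_3$ by hand from a geodesic $s_0,s_1,s_2,s_3$ of $\Gamma$, using the labelling of the edges of $\cD_A$ and an angle computation in links to produce CAT(0) axes that are local geodesics with angles strictly greater than $\pi$ at the break points.

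A second, independent gap: the facts you call ``standard features of rank-one isometries'' --- that the setwise stabiliser of the pair $\{\xi^+,\xi^-\}$ is virtually cyclic, and that $g$ has North--South dynamics on the visual boundary --- are standard for \emph{proper} (typically cocompact) actions on proper CAT(0) spaces. The action of $A$ on $\cD_A$ is badly non-proper (vertex stabilisers are the infinite special subgroups $aA_Ta^{-1}$) and $\cD_A$ need not be locally compact, so neither fact can be quoted off the shelf; for instance the pointwise stabiliser of an axis can a priori be infinite, and compactness of the boundary, used in the usual proofs of North--South dynamics, is unavailable. The paper replaces both by a direct argument: a Flat Half-Strip Theorem (Lemma \ref{Half_Strip}) shows that the only boundary points fixed by $g$ are its two axis endpoints, and likewise for $h$; a common fixed point at infinity of powers of $g$ and $h$ would force their axes to be asymptotic, hence to bound a flat strip, contradicting the angle $3\pi/2$ along the axes. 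To salvage your softer approach you would have to prove these dynamical statements for the specific elements at hand, which essentially amounts to redoing the paper's argument.
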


 The proof of this lemma requires some preliminary work. Since $\Gamma$ has diameter at least $3$,  let $s_0, s_1, s_2, s_3$ be a geodesic of $\Gamma$.

\begin{lemma} Let $g:=s_0s_3$.  Then $g$ is a hyperbolic element and admits  an axis $\Lambda_g$ (for the CAT(0) metric) that is  a reunion of geodesic segments such that two consecutive segments make an angle strictly greater than $\pi$ (for the angular distance on the link).
\end{lemma}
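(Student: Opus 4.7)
The plan is to exhibit $\Lambda_g$ as a bi-infinite concatenation of $g$-translates of a single two-piece piecewise geodesic, and to verify the required angle conditions explicitly. Set $v_0 := \{1\}$, so that $g v_0 = \{s_0 s_3\}$, and let $\gamma_1$ be the length-two edge-path $\{1\}-A_{\{s_0\}}-\{s_0\}$ and $\gamma_2$ the length-two edge-path $\{s_0\}-s_0 A_{\{s_3\}}-\{s_0 s_3\}$. The candidate axis is
\[ \Lambda_g := \bigcup_{n \in \bbZ} g^n(\gamma_1 \cup \gamma_2). \]
Since $\cD_A$ is CAT(0), it suffices to prove that $\Lambda_g$ is a local geodesic, for which two types of angle computations are needed: that each $\gamma_i$ is straight at its unique interior vertex (angle $\pi$), and that consecutive segments $g^n \gamma_2$ and $g^{n+1} \gamma_1$ meet at an angle strictly greater than $\pi$.

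Both computations rest on the standard identification, for each vertex of the form $\{a\}$, of the link of $\{a\}$ in $\cD_A$ with the nerve of $\Gamma$, that is the flag complex on $\Gamma$ whose simplices are the finite-type subsets of $S$: a link-vertex labelled $s$ corresponds to the edge from $\{a\}$ to $aA_{\{s\}}$. The key input is the classical fact that in an all-right flag spherical complex, two vertices that share no common neighbour in the $1$-skeleton lie at CAT(1) angular distance strictly greater than $\pi$.

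The junction angle at $\{s_0\}$ between $\gamma_1$ and $\gamma_2$ is then the angular distance in the link of $\{s_0\}$ between the link-vertices labelled $s_0$ (incoming edge from $A_{\{s_0\}}$) and $s_3$ (outgoing edge toward $s_0 A_{\{s_3\}}$); since $s_0, s_1, s_2, s_3$ forms a geodesic of $\Gamma$, the vertices $s_0$ and $s_3$ have no common neighbour in $\Gamma$, giving the strict inequality. By $g$-equivariance the same conclusion holds at every junction. For the straightness of $\gamma_1$ at the interior vertex $A_{\{s_0\}}$, I would inspect the link of $A_{\{s_0\}}$: the two incident downward edges define link-vertices that are not joined by an edge (since no single $2$-cube of $\cD_A$ contains both the vertices $\{1\}$ and $\{s_0\}$, by direct inspection of the cube structure), but they do admit a common neighbour, namely the upward edge $A_{\{s_0\}}-A_{\{s_0,s_1\}}$ (the two relevant squares with $T_0=\emptyset$ and $T_1=\{s_0,s_1\}$ exist because $m(s_0,s_1)<\infty$), forcing the angle to equal exactly $\pi$. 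An analogous argument, using a neighbour of $s_3$ in $\Gamma$, handles the interior vertex $s_0 A_{\{s_3\}}$ of $\gamma_2$.

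Once these local angle conditions are established, $\Lambda_g$ is a local CAT(0) geodesic, hence a genuine geodesic of $\cD_A$, on which $g$ acts by a translation of positive length; this proves $g$ hyperbolic with an axis of the stated form. The step I expect to be most delicate is the straightness verification at $A_{\{s_0\}}$ and $s_0 A_{\{s_3\}}$, as the link of a vertex of the form $aA_T$ with $T\neq \emptyset$ is considerably more intricate than the nerve of $\Gamma$ and mixes upward and downward directions; one must check both that the two downward directions are non-adjacent (a cubical calculation) and that they admit a common upward neighbour, which is precisely where the existence of a neighbour of $s_0$ (respectively $s_3$) in $\Gamma$ is used.
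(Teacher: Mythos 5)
Your proof is correct and follows essentially the same route as the paper's: both exhibit the axis as a $g$-invariant concatenation of length-two edge-paths and verify the local geodesic condition by computing angular distances in vertex links (distance strictly greater than $\pi$ at the singleton vertices because $s_0$ and $s_3$ have no common neighbour in $\Gamma$, distance exactly $\pi$ at the rank-one vertices). Your justification of the angle-$\pi$ step via the common upward neighbour $A_{\{s_0,s_1\}}$ (resp.\ a neighbour of $s_3$) fills in a detail the paper only asserts, and your fundamental domain $\gamma_1\cup\gamma_2$ running from $\{1\}$ to $\{s_0s_3\}$ has the advantage of concatenating correctly under powers of $g=s_0s_3$.
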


\begin{proof}
For $i = 0 $ or $3$, we denote by $e_i$ the edge between the vertices $\{1\}$  and $A_{\{s_i\}}$ of $\cD_A$. Let $Y:= s_0^{-1}e_0\cup  e_0 \cup e_3 \cup s_3e_3$. Then $\Lambda_g:= \bigcup_{n\in \bbZ}g^n Y$ is a CAT(0) geodesic, and an axis for $g$. Indeed, we have the following properties of angles between consecutive edges: 
\begin{itemize}
\item The angle (for the angular distance on the link) between $e_0$ and $e_3$ is $3\pi/2>\pi$ by construction of  $s_0, s_1, s_2, s_3$.
\item The angle between $e_0$ and $s_0^{-1}e_0$ (between $e_3$ and $s_3e_3$ respectively) is $\pi$: Indeed, the angle is at most $\pi$ by construction, and if the angle were $\pi/2$, then since $\cD_A$ is a CAT(0) cube complex $e_i$ and $s_ie_i$ would be two adjacent edges of a $3$-cube with label $e_i$, which is impossible by construction of $\cD_A$.
\end{itemize}
Thus, $\Lambda_g$ is a local geodesic, hence a global geodesic. Moreover, $\Lambda_g$ is clearly invariant under the action of $\langle g \rangle$, and the angle made by $\Lambda_g$ at every $\langle g \rangle$-translate of the vertex $\{1\}\in \cD_A$ is $3\pi/2$. 
\end{proof}

To show that $g$ is a rank-one element, we  need the following modified version of the Flat Strip Theorem:

\begin{lemma}[Flat ``Half-strip'' Theorem]\label{Half_Strip}
Let $Y$ be a CAT(0) space, let $\Lambda$ be a geodesic line, and let $h$ be an isometry of $Y$ preserving $\Lambda$. Let $y \in \partial Y \setminus \partial \Lambda$ and let $\gamma$ be a geodesic ray from a point $x$ of $\Lambda$ to $y$, which meets $\Lambda$ in exactly one point. If $\gamma$ and $h\gamma$ are asymptotic, then the convex hull of $\gamma \cup h\gamma$ isometrically embeds in $\bbR^2$ with its standard CAT(0) metric. 
\end{lemma}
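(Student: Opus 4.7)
The plan is to show that the distance function $\phi(t) := d(\gamma(t), h\gamma(t))$ is constant equal to $D := d(x, hx)$, and then invoke the Sandwich Lemma (the constant-width flat strip theorem from CAT(0) geometry) to conclude that the convex hull of $\gamma \cup h\gamma$ is a flat half-strip, hence embeds isometrically into $\bbR^2$.

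I would first establish that $\phi$ is non-increasing. By the standard CAT(0) convexity of the distance function along pairs of geodesics, $\phi$ is convex on $[0,\infty)$. The asymptoticity hypothesis forces $\phi$ to be bounded above, and a bounded convex function on $[0,\infty)$ is non-increasing. Hence $\phi(t) \leq \phi(0) = D$ for all $t \geq 0$.

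The heart of the proof is to show that $\phi$ is in fact constant. I would treat as the main case the situation where $h$ acts on $\Lambda$ as a nontrivial translation of length $D > 0$; the case where $h$ fixes $\Lambda$ pointwise is trivial (then $hx = x$ and $h\gamma = \gamma$ by uniqueness of geodesic rays to a boundary point), and the case where $h$ reverses the orientation of $\Lambda$ is handled separately by an order-two symmetry argument (see below). In the translation case, the asymptoticity of $h\gamma$ and $\gamma$ forces $hy = y$, so each iterate $h^n\gamma$ is a geodesic ray from $h^n x \in \Lambda$ to $y$, and $d(x, h^n x) = nD$. Applying the triangle inequality along the chain $\gamma(t_0), h\gamma(t_0), \ldots, h^n\gamma(t_0)$, and using that each consecutive pair lies at distance $\phi(t_0)$ by the $h$-invariance of the distance, one obtains $d(\gamma(t_0), h^n\gamma(t_0)) \leq n\phi(t_0)$. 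Combining with the reverse triangle inequality and the fact that $d(h^n\gamma(t_0), h^n x) = t_0$ yields
\[
nD - t_0 \;\leq\; d(\gamma(t_0), h^n x) \;\leq\; n\phi(t_0) + t_0,
\]
so $\phi(t_0) \geq D - 2t_0/n$. Letting $n \to \infty$ forces $\phi(t_0) = D$ for every $t_0 \geq 0$.

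With $\phi \equiv D$, the Sandwich Lemma, applied to the truncated geodesic segments $\gamma|_{[0,T]}$ and $h\gamma|_{[0,T]}$, exhibits their convex hull as isometric to the flat rectangle $[0,T] \times [0,D] \subset \bbR^2$ for each $T$. Passing to the union as $T \to \infty$ yields the desired isometric embedding of the convex hull of $\gamma \cup h\gamma$ into $\bbR^2$ as the half-strip $[0,\infty) \times [0,D]$. I expect the main obstacle to be the reflection case, where the iteration argument collapses since $h^2 x = x$. There, I would leverage the hypothesis that $\gamma$ meets $\Lambda$ in exactly one point together with the $h$-symmetry that swaps $\gamma$ and $h\gamma$: these should force the angle at $x$ between $\gamma$ and the direction along $\Lambda$ toward $hx$ to equal $\pi/2$, recovering flatness by a direct symmetrization argument across the midpoint of $[x, hx]$.
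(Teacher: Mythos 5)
Your route is genuinely different from the paper's. The paper never analyses the distance function $\phi(t)=d(\gamma(t),h\gamma(t))$: it forms a double $Y'=(Y\sqcup Y)/\psi$ of $Y$, glued along $\Lambda$ via the reflection $\psi$ about the midpoint of $[x,hx]$, observes that $\gamma_1\cup(h\gamma)_2$ and $(h\gamma)_1\cup\gamma_2$ are then two asymptotic bi-infinite geodesics of the CAT(0) space $Y'$, and quotes the classical Flat Strip Theorem for lines. Your chain argument in the translation case --- squeezing $d(\gamma(t_0),h^nx)$ between $nD-t_0$ and $n\phi(t_0)+t_0$ and letting $n\to\infty$ --- is correct and gives a clean, self-contained proof that $\phi\equiv D$ there.

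There are, however, two gaps. First, the final step is not a legitimate application of the Sandwich Lemma: that lemma requires the distance from $\gamma(t)$ to the \emph{set} $h\gamma([0,T])$ to be constant, i.e.\ that synchronous points are mutual nearest-point projections. Constancy of $d(\gamma(t),h\gamma(t))$ does not formally give $d(\gamma(t),h\gamma(s))\geq D$ for $s\neq t$; neither convexity nor a rerun of your chain argument yields more than the triangle-inequality bound $D-|s-t|$. The statement you need at this point is true, but it requires an extra argument (e.g.\ first variation of $\phi$ at both ends of $[0,T]$ to show the four angles of the quadrilateral $\gamma(0),\gamma(T),h\gamma(T),h\gamma(0)$ sum to $2\pi$, then the Flat Quadrilateral Theorem); the paper's doubling trick sidesteps the issue entirely, since for bi-infinite lines boundedness plus convexity of the distance to the other line already forces constancy of the distance to the set. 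Second, your treatment of the orientation-reversing case cannot be completed: the claim that the angle at $x$ between $\gamma$ and $\Lambda$ must be $\pi/2$ is false, and in fact the lemma itself fails for such $h$. Take $Y=T\times\bbR$ with $T$ a tripod with centre $c$ and branches $r_1,r_2,r_3$, let $\Lambda=(r_1\cup r_2)\times\{0\}$, let $h$ swap $r_1$ and $r_2$, and let $\gamma$ start at $(p,0)$ with $p\in r_1$ and travel towards the boundary point whose direction is a convex combination of $r_3$ and the upward vertical: then $\gamma$ meets $\Lambda$ only at $(p,0)$ and $\gamma$, $h\gamma$ are asymptotic (they eventually coincide above $c$), yet the convex hull contains the four points $x$, $hx$, the first common point $q$ of the two rays, and a far point $z$ on the common tail, with $q$ lying on both geodesics $[x,z]$ and $[hx,z]$ and $d(x,hx)>0$ --- a configuration that cannot be realised in $\bbR^2$. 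The lemma is only invoked in the paper for $h=g^2$ a translation along $\Lambda$, and the paper's own gluing argument also implicitly needs this (the concatenation at the glued point is a local geodesic precisely because the reflection $\psi$ composed with the translation matches the angle on one side of $\Lambda$ with the supplementary angle on the other). The correct move is to restrict the statement to translations, not to attempt the reflection case.
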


\begin{proof}
Let us construct a `double' of $Y$ as follows. Let $o \in \Lambda$ be the midpoint between $x$ and $hx$. Such a choice  allows to define (uniquely) a reflection $\psi$ of $\Lambda$ across $o$ which is an isometry of $\Lambda$. We then define 
$$Y' = (Y \sqcup Y)/\psi,$$
i.e. the space obtained from two copies of $Y$ by identifying the two copies of $\Lambda$ using the reflection $\psi$. As a convention, we denote by $Y_1$ and $Y_2$ these copies, we use the subscript $\cdots_i$ to indicate to  which copy of $Y$ the object belongs, and we identify $Y$ with the subspace $Y_1$ of $Y'$.  The space $Y'$ is again a CAT(0) space (as obtained from two CAT(0) spaces by identifying two convex subspaces along an isometry). Note that $\gamma\subset Y$ is a subray of  $\gamma_1 \cup (h\gamma)_2$ and $h\gamma\subset Y$ is a subray of  $(h\gamma)_1\cup \gamma_2$. Moreover, we have constructed $Y'$ so that  $\gamma_1 \cup (h\gamma)_2$ and  $(h\gamma)_1\cup \gamma_2$ are local geodesics of $Y'$, hence global  geodesics of $Y'$. As these geodesic lines are asymptotic by construction, it follows that their geodesic hull is a flat strip, by the Flat Strip Theorem \cite[Theorem II$.2.13$]{BridsonHaefliger}. In particular, the geodesic hull of $\gamma \cup h\gamma$ in $Y$ isometrically embeds in $\bbR^2$.
\end{proof}

We can now prove:

\begin{lemma}
The only points of $\partial \cD_A$ fixed by $g$ are its two limit points $g^{+\infty}, g^{-\infty} \in \partial \Lambda_g$.  
\end{lemma}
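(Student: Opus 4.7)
The plan is to argue by contradiction: assume that $g$ fixes some $y \in \partial \cD_A \setminus \partial \Lambda_g$, and exploit the angular defect $3\pi/2 > \pi$ at the $\langle g \rangle$-translates of $\{1\}$ along $\Lambda_g$. First I would pick a geodesic ray $\gamma$ from a point of $\Lambda_g$ to $y$. By CAT(0) uniqueness of geodesics, the intersection $\gamma \cap \Lambda_g$ must be an initial sub-segment of $\gamma$: if $\gamma(t_1), \gamma(t_2) \in \Lambda_g$ with $t_1 < t_2$, then the geodesic sub-segments of $\gamma$ and of $\Lambda_g$ between those two points coincide. Replacing the starting point of $\gamma$ by the last point of this sub-segment, we may assume $\gamma \cap \Lambda_g = \{x\}$ for some $x \in \Lambda_g$. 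Since each power $g^n$ fixes $y$ and preserves $\Lambda_g$, the ray $g^n\gamma$ is a geodesic ray from $g^n x \in \Lambda_g$ to $y$, asymptotic to $\gamma$ and meeting $\Lambda_g$ only at $g^n x$.

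The heart of the proof is to apply Lemma \ref{Half_Strip} with $h = g^2$ to obtain an isometric embedding $\phi$ of the convex hull $C$ of $\gamma \cup g^2\gamma$ into $\bbR^2$. In the image, $\phi([x, g^2 x])$ is a straight segment, while $\phi(\gamma)$ and $\phi(g^2\gamma)$ are two parallel rays emanating from its endpoints (parallel because they are asymptotic in the flat plane, and non-collinear with the base segment because $y \notin \partial \Lambda_g$). Hence $\phi(C)$ is a genuine two-dimensional convex region in $\bbR^2$. As $[x, g^2 x]$ has length $2\ell(g)$, while the $3\pi/2$-corners of $\Lambda_g$---namely the $\langle g \rangle$-translates of $\{1\}$---are distributed along $\Lambda_g$ at spacing $\ell(g)$, the open interior of $[x, g^2 x]$ must contain at least one such corner $v = g^k \cdot \{1\}$.

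Because $\phi(v)$ lies in the open interior of the base of the 2-dimensional region $\phi(C)$, there is some $\eps > 0$ for which the flat half-disk in $\bbR^2$ of radius $\eps$ centred at $\phi(v)$, on the side where the rays live, is entirely contained in $\phi(C)$. Pulling this half-disk back via $\phi^{-1}$ gives a flat half-disk of radius $\eps$ isometrically embedded in $\cD_A$ and centred at $v$. Its bounding semicircle then yields an arc in $lk(v)$ of length $\pi$ connecting the forward and backward directions of $\Lambda_g$ at $v$, so that $d_{lk(v)}(\xi_+,\xi_-) \leq \pi$. This contradicts the value $3\pi/2$ given by the construction of $\Lambda_g$, and therefore establishes that no such $y$ can exist.

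The main subtlety is to guarantee that the base of the flat half-strip contains a $3\pi/2$-corner in its open interior regardless of where $x$ sits along $\Lambda_g$; this is the reason for invoking the Flat Half-Strip Theorem with $g^2$ rather than $g$. Once that is arranged, the contradiction is immediate from the fact that isometric embeddings preserve distances, and hence transport link-metric angles faithfully between $\bbR^2$ and $\cD_A$.
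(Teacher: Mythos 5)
Your proposal is correct and follows essentially the same route as the paper: take a ray $\gamma$ to the putative fixed point meeting $\Lambda_g$ in one point, apply the Flat Half-strip Theorem to $\gamma$ and $g^2\gamma$, and derive a contradiction from the $3\pi/2$ corner of $\Lambda_g$ trapped in the flat region. Your write-up is in fact more detailed than the paper's at the two points it leaves implicit (why the base $[x,g^2x]$, having length $2\ell(g)$, must contain a translate of $\{1\}$ in its open interior, and why a flat half-disk at that corner contradicts the link angle), so there is nothing to object to.
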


\begin{proof} 
Let $z\in \partial \cD_A \setminus \{g^{+\infty}, g^{-\infty}\}$  and let $\gamma'$ be a geodesic ray from $\Lambda_g$ to $z$ that meets $\Lambda_g$ in exactly one point. If $gz=z$, then $\gamma'$ and $g^2\gamma'$ are asymptotic geodesic rays, hence the convex hull $H$ of $\gamma' \cup g^2\gamma'$ isometrically embeds in $\bbR^2$  by the Flat Half-strip Theorem \ref{Half_Strip}. But  $\gamma$ and $ g^2\gamma'$ both meet $\Lambda_g$ in exactly one point, and by construction $H$ contains two geodesic subsegments of $\Lambda_g$ which make an angle $3\pi/2$ for the angular distance on the link. This yields the desired contradiction.
\end{proof}

\begin{proof}[Proof of Lemma \ref{finite_orbit}]
First notice that one could have proved the above lemma for the element $h:= s_0^2s_3$ by applying exactly the same reasoning. If $A$ were to admit a finite orbit at infinity, then some power of $g$ and some power of $h$ would fix a common point at infinity. But the axes of $g$ and $h$ cannot be asymptotic for otherwise, being already distinct,  $\Lambda_g$ and $\Lambda_h$ would bound a flat strip of positive width by the Flat Strip Theorem \ref{Half_Strip}, and the same reasoning as above would  yield a contradiction.
\end{proof}

\begin{proof}[Proof of Proposition \ref{DeligneAction}]
This is a direct consequences of Lemmas \ref{Deligne_irreducible}, \ref{Deligne_essential}, and \ref{finite_orbit}.
\end{proof}
We are now ready to apply Theorem \ref{Main} to the action of $A$ on $\cD_A$ in order to complete the proof of Theorem \ref{Ruth}.
\begin{proof}[Proof of Theorem \ref{Ruth}]
It is enough to consider the case where $\Gamma$ is connected, for otherwise the group is a free product. According to the previous lemma, the action of $A$ on its Deligne complex $\cD_A$ is essential and non-elementary. In order to apply Theorem \ref{Main} to the action of $A$ on $\cD_A$,  
we choose two vertices $s, t$ of $S$ with disjoint links, which is possible since $\Gamma$ has diameter at least $3$. It follows from the aforementioned result of \cite{Lek} that $A_{lk(s)} \cap A_{lk(t)}=A_\emptyset=\{1\}$. Thus, the hyperplanes $\hh_s$ and $\hh_t$ have stabilisers which intersect trivially, hence Theorem \ref{Main} applies and $A$ is acylindrically hyperbolic. 
\end{proof}

\bibliographystyle{alpha}
\bibliography{Higman_Cubical}

\bigskip
  \footnotesize

 \textsc{Laboratoire de Math\'ematiques J.A. Dieudonn\'e, Universit\'e de Nice Sophia Antipolis, Parc Valrose, 06108 Nice Cedex 02, France} \par\nopagebreak
 \texttt{\href{mailto:indira.chatterji@math.cnrs.fr}{indira.chatterji@math.cnrs.fr}} 
 
  \textsc{Department of Mathematics and the Maxwell Institute for Mathematical Sciences, Heriot-Watt University, Riccarton, EH14 4AS Edinburgh, United Kingdom}\par\nopagebreak
\texttt{\href{mailto:alexandre.martin@hw.ac.uk}{alexandre.martin@hw.ac.uk}}

\end{document}